\newcommand*{\MRrefBentmann}[2]{\linebreak[0] \href{http://www.ams.org/mathscinet-getitem?mr=#1}{MR \textbf{#1}}}
\def\mathclap{\mathpalette\mathclapinternal}
\def\mathclapinternal#1#2{\clap{$\mathsurround=0pt#1{#2}$}}
\let\oldbigoplusBentmann=\bigoplus
\def\alternativebigoplusBentmann{\oldbigoplusBentmann\limits}
\DeclareMathOperator{\HomBentmann}{Hom}
\DeclareMathOperator{\PrimBentmann}{Prim}
\DeclareMathOperator{\ExtBentmann}{Ext}
\DeclareMathOperator{\TorBentmann}{Tor}
\DeclareMathOperator{\kerBentmann}{ker}
\DeclareMathOperator{\cokerBentmann}{coker}
\DeclareMathOperator{\imBentmann}{im}
\DeclareMathOperator{\diagBentmann}{diag}
\newcommand*{\KKBentmann}{\textup{KK}}
\newcommand*{\KBentmann}{\textup{K}}
\newcommand*{\SyzBentmann}{\textup{Syz}}
\newcommand*{\HomologyBentmann}{\textup{H}}
\newcommand*{\FKBentmann}{\textup{FK}}
\newcommand*{\KKBentmanncat}{\mathfrak{KK}}
\newcommand*{\idealBentmann}{{\mathfrak{I}}}
\newcommand*{\AbBentmann}{\mathfrak{Ab}}      
\newcommand*{\ModBentmannc}[1]{\mathfrak{Mod}\bigl(#1\bigr)_\textup{c}}
\newcommand*{\opBentmann}{\mathrm{op}}
\newcommand*{\ConeBentmann}{\mathrm{Cone}}
\newcommand*{\ZBentmann}{\mathbb{Z}}
\newcommand*{\NBentmann}{\mathbb{N}}
\newcommand*{\QBentmann}{\mathbb{Q}}
\newcommand*{\nbBentmann}{\nobreakdash}  
\newcommand*{\CstarBentmann}{\texorpdfstring{$C^*$\nbBentmann-}{C*-}}
\newcommand*{\blankBentmann}{\text{\textvisiblespace}}
\newcommand*{\defeqBentmann}{\mathrel{\vcentcolon=}}
\newcommand*{\ininBentmann}{\mathrel{{\in}{\in}}}
\newcommand*{\LCBentmann}{\mathbb{LC}}
\newcommand*{\NTBentmann}{\mathcal{NT}}
\newcommand*{\NTBentmannC}{{\mathcal{NT}^*}}
\newcommand*{\NTBentmannnil}{\mathcal{NT}_\textup{nil}}
\newcommand*{\NTBentmannss}{\mathcal{NT}_\textup{ss}}
\newcommand*{\MssBentmann}{M_\textup{ss}}
\newcommand*{\sss}{\textup{ss}}
\newcommand*{\bigbBentmann}[1]{\bigl(#1\bigr)}
\newcommand*{\idBentmann}{\textup{id}}
\newcommand*{\RepBentmann}{\mathcal{R}}
\newcommand*{\CstBentmann}{C^*}
\newcommand*{\CuntzBentmann}{\mathcal O}
\newcommand*{\epiBentmann}{\twoheadrightarrow}
\newcommand*{\SphereBentmann}{\mathbb S}
\newcommand*{\BootBentmann}{\mathcal{B}}
\newtheorem{propositionBentmann}{Proposition}
\newtheorem{lemmaBentmann}{Lemma}
\newtheorem{remarkBentmann}{Remark}
\newtheorem{exampleBentmann}{Example}
\title[Projective dimension in filtrated K-theory]{Projective dimension in filtrated K-theory}
\author{Rasmus Bentmann}
\address{Department of Mathematical Sciences\\University of
Copenhagen\\Universitetsparken 5\\2100 Copenhagen \O \\Denmark}
\email{bentmann@math.ku.dk}
\thanks{The author was supported by the Danish National Research Foundation through the Centre for Symmetry and Deformation and by the Marie Curie Research Training Network EU-NCG}
\begin{document}

\begin{abstract}
Under mild assumptions, we characterise modules with projective resolutions of length $n\in\NBentmann$ in the target category of filtrated $\KBentmann$-theory over a finite topological space in terms of two conditions involving certain $\TorBentmann$-groups. We show that the filtrated $\KBentmann$-theory of any separable \CstarBentmann algebra over any topological space with at most four points has projective dimension~2 or less. We observe that this implies a universal coefficient theorem for rational equivariant $\KKBentmann$-theory over these spaces. As a contrasting example, we find a separable \CstarBentmann algebra in the bootstrap class over a certain five-point space, the filtrated $\KBentmann$-theory of which has projective dimension~3. Finally, as an application of our investigations, we exhibit Cuntz-Krieger algebras which have projective dimension~2 in filtrated $\KBentmann$-theory over their respective primitive spectrum.
\end{abstract}

\maketitle

\section{Introduction}
A far-reaching classification theorem in \cite{KirchbergBentmann} motivates the computation of Eberhard Kirchberg's ideal-related Kasparov groups \index{$KKBentmann(X)$-theory} $\KKBentmann(X;A,B)$ for separable \CstarBentmann al\-ge\-bras $A$ and $B$ over a non-Hausdorff topological space $X$ by means of $\KBentmann$-theoretic invariants. We are interested in the specific case of finite spaces here. In \cites{MN:HomologicalIBentmann, MN:FiltratedBentmann}, Ralf Meyer and Ryszard Nest laid out a theoretic framework that allows for a generalisation of Jonathan Rosenberg's and Claude Schochet's universal coefficient theorem \cite{RSBentmann} to the equivariant setting. Starting from a set of generators of the equivariant bootstrap class, they define a homology theory with a certain universality property, which computes $\KKBentmann(X)$-theory via a spectral sequence. In order for this \index{universal coefficient sequence} \emph{universal coefficient} spectral sequence to degenerate to a short exact sequence, it remains to be checked \emph{by hand} that objects in the range of the homology theory admit projective resolutions of length~1 in the Abelian target category.

Generalising earlier results from \cites{Bonkat:ThesisBentmann, Restorff:ThesisBentmann,MN:FiltratedBentmann} the verification of the above-mentioned condition for \index{filtrated $\KBentmann$-theory} \emph{filtrated $\KBentmann$-theory} was achieved in \cite{BKBentmann} for the case that the underlying space is a disjoint union of so-called accordion spaces. A finite connected $T_0$-space $X$ is an accordion space if and only if the directed graph corresponding to its specialisation pre-order is a Dynkin quiver of type~A. Moreover, it was shown in \cites{MN:FiltratedBentmann,BKBentmann} that, if $X$ is a finite $T_0$-space which is not a disjoint union of accordion spaces, then the projective dimension of filtrated $\KBentmann$-theory over~$X$ is \emph{not} bounded by~$1$ and objects in the equivariant bootstrap class are \emph{not} classified by filtrated $\KBentmann$-theory. The assumption of the separation axiom $T_0$ is not a loss of generality in this context (see \cite{MN:BootstrapBentmann}*{\S2.5}).

There are two natural approaches to tackle the problem arising for non-accordion spaces: one can either try to refine the invariant---this has been done with some success in \cite{MN:FiltratedBentmann} and \cite{BentmannBentmann}; or one can hold onto the invariant and try to establish projective resolutions of length~1 on suitable subcategories or localisations of the category $\KKBentmanncat(X)$, in which $X$-equivariant $\KKBentmann$-theory is organised. The latter is the course we pursue in this note. We state our results in the next section.

\subsection*{Acknowledgement} Parts of this paper are based on the author's Diplom thesis~\cite{BentmannBentmann} which was supervised by Ralf Meyer at the University of G\"ottingen. I would like to thank my PhD-supervisors,  S\o ren Eilers and Ryszard Nest, for helpful advice, Takeshi Katsura for pointing out a mistake in an earlier version of the paper and the anonymous referee for the suggested improvements.

\section{Statement of Results}
The definition of filtrated $\KBentmann$-theory and related notation are recalled in~\S\ref{sec:preliminariesBentmann}.  

\begin{propositionBentmann}
	\label{pro:projdim1Bentmann}
Let $X$ be a finite topological space. Assume that the ideal $\NTBentmannnil\subset\NTBentmannC(X)$ is nilpotent and that the decomposition $\NTBentmannC(X)=\NTBentmannnil\rtimes\NTBentmannss$ holds. Fix $n\in\NBentmann$. For an $\NTBentmannC(X)$-module $M$, the following assertions are equivalent:
\begin{enumerate}[label=\textup{(\roman*)}]
\item
$M$ has a projective resolution of length~$n$.
\item
The Abelian group $\TorBentmann_n^{\NTBentmannC(X)}(\NTBentmannss,M)$ is free and the Abelian group\newline $\TorBentmann_{n+1}^{\NTBentmannC(X)}(\NTBentmannss,M)$ vanishes.
\end{enumerate}
\end{propositionBentmann}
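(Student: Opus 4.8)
The plan is to reduce the statement, by dimension shifting, to the case $n=0$, and then to prove that case directly using the nilpotency of $\NTBentmannnil$ together with the ring splitting $\NTBentmannss\embedBentmann\NTBentmannC(X)$ supplied by the decomposition $\NTBentmannC(X)=\NTBentmannnil\rtimes\NTBentmannss$. Throughout write $R\defeqBentmann\NTBentmannC(X)$, $I\defeqBentmann\NTBentmannnil$ and $S\defeqBentmann\NTBentmannss$, so that $S=R/I$ and the inclusion $S\embedBentmann R$ splits the quotient map $R\epiBentmann S$. I will also use repeatedly that, by the structure of $\NTBentmannss$ recalled in \S\ref{sec:preliminariesBentmann}, an $S$-module is projective if and only if its underlying Abelian group is free; this is what lets me translate the freeness statements in~(ii) into projectivity over~$S$.

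The heart of the argument is the case $n=0$, that is, the assertion that an $R$-module $P$ is projective if and only if $S\otimes_R P$ is free as an Abelian group and $\TorBentmann_1^{R}(S,P)=0$. The forward direction is immediate: a projective module is flat, so $\TorBentmann_1^{R}(S,P)=0$, and $S\otimes_R P$ is projective over $S$, hence free. For the converse I would set $\bar P\defeqBentmann S\otimes_R P$ and form the induced module $Q\defeqBentmann R\otimes_S\bar P$; since $\bar P$ is projective over~$S$ and induction along $S\embedBentmann R$ is left adjoint to the exact restriction functor, $Q$ is projective over~$R$ and satisfies $S\otimes_R Q\cong\bar P$. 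Using that $\bar P$ is projective over~$S$, I lift the identity of $\bar P$ through the surjection $P\epiBentmann P/IP=\bar P$ of $S$-modules; by adjunction this produces an $R$-linear map $\alpha\colon Q\to P$ whose reduction $S\otimes_R\alpha$ is the identity of $\bar P$. Because $\NTBentmannnil$ is nilpotent, Nakayama's lemma shows that $\alpha$ is surjective; applying $S\otimes_R\blankBentmann$ to $0\to\kerBentmann\alpha\to Q\to P\to 0$ and invoking $\TorBentmann_1^{R}(S,P)=0$ gives $S\otimes_R\kerBentmann\alpha=0$, whence $\kerBentmann\alpha=\NTBentmannnil\cdot\kerBentmann\alpha$, and a second application of nilpotency forces $\kerBentmann\alpha=0$. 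Thus $\alpha$ is an isomorphism and $P\cong Q$ is projective.

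With the base case in hand I would pass to general~$n$ by dimension shifting along a projective resolution $P_\bullet\epiBentmann M$, writing $\SyzBentmann^n(M)$ for the $n$-th syzygy. Since $M$ admits a projective resolution of \emph{length}~$n$ precisely when $\SyzBentmann^n(M)$ is projective, and since the vanishing of $\TorBentmann$ on projectives yields a natural isomorphism $\TorBentmann_{n+1}^{R}(S,M)\cong\TorBentmann_1^{R}(S,\SyzBentmann^n(M))$ together with an exact sequence $0\to\TorBentmann_n^{R}(S,M)\to S\otimes_R\SyzBentmann^n(M)\to S\otimes_R P_{n-1}$, the base case applied to $P=\SyzBentmann^n(M)$ translates directly into the two conditions in~(ii). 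The only point requiring care is the bookkeeping with free Abelian groups: for (i)$\Rightarrow$(ii) one reads off that $\TorBentmann_n^{R}(S,M)$ is a subgroup of the free group $S\otimes_R\SyzBentmann^n(M)$ and hence free, while for (ii)$\Rightarrow$(i) one uses that an extension of a free Abelian group by a free Abelian group is free to conclude that $S\otimes_R\SyzBentmann^n(M)$ is free, so that $\SyzBentmann^n(M)$ is projective by the base case. Here I rely on the fact that subgroups of free Abelian groups are free.

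I expect the base case to be the main obstacle, and within it the genuinely load-bearing steps are the construction of the comparison map $\alpha\colon R\otimes_S(S\otimes_R P)\to P$ out of the semidirect-product splitting and the verification that its reduction modulo $\NTBentmannnil$ is an isomorphism; everything downstream is formal once $\NTBentmannnil$ is known to be nilpotent, since nilpotency upgrades the two weak, reduced statements---surjectivity of $\alpha$ and vanishing of $S\otimes_R\kerBentmann\alpha$---into the strong statements that $\alpha$ is onto and injective. The remaining risk lies purely in the identifications of $\NTBentmannss$-modules with tuples of Abelian groups and of projectivity with freeness over $\NTBentmannss$; these should follow from the explicit description of $\NTBentmannC(X)$ and its semisimple quotient, but they must be in place before the translation between~(ii) and the base case is legitimate.
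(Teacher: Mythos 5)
Your argument is correct, and its outer layer---reducing assertion (i) to projectivity of the $n$\nbBentmann th syzygy, shifting $\TorBentmann_{n+1}^{\NTBentmannC}(\NTBentmannss,M)\cong\TorBentmann_1^{\NTBentmannC}(\NTBentmannss,\SyzBentmann^n M)$, and transferring the freeness condition through the left-exact sequence $0\to\TorBentmann_n^{\NTBentmannC}(\NTBentmannss,M)\to(\SyzBentmann^n M)_\sss\to(P_{n-1})_\sss$ with the same subgroup-of-free/extension-of-free bookkeeping---is exactly the paper's proof, which treats $n=1$ explicitly and then inducts on syzygies. Where you genuinely depart from the paper is the base case: the paper does not prove the characterisation ``$P$ projective $\iff$ $P_\sss$ is free and $\TorBentmann_1^{\NTBentmannC}(\NTBentmannss,P)=0$'' but quotes it as Lemma~\ref{lem:projmodulesBentmann} from \cite{MN:FiltratedBentmann}*{Theorem 3.12}, whereas you reprove it from scratch: inducing $\bar P=\NTBentmannss\otimes_{\NTBentmannC}P$ back up along the splitting $\NTBentmannss\embedBentmann\NTBentmannC$, lifting $\idBentmann_{\bar P}$ through $P\epiBentmann P/\NTBentmannnil P$ using projectivity of $\bar P$ over $\NTBentmannss$, and then applying nilpotent Nakayama twice, first to $\cokerBentmann\alpha$ and then to $\kerBentmann\alpha$ after the hypothesis $\TorBentmann_1^{\NTBentmannC}(\NTBentmannss,P)=0$ forces $\NTBentmannss\otimes_{\NTBentmannC}\kerBentmann\alpha=0$. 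That argument is sound in the rings-with-several-objects setting (restriction along $\NTBentmannss\subset\NTBentmannC$ is exact, so induction preserves projectives; $\NTBentmannss$-projectivity is entrywise freeness; and $\NTBentmannss\otimes_{\NTBentmannC}N\cong N/\NTBentmannnil N=N_\sss$), and it is in substance a proof of the cited theorem of Meyer--Nest. What your route buys is a self-contained argument that makes visible exactly where the two standing hypotheses---nilpotency of $\NTBentmannnil$ and the decomposition $\NTBentmannC=\NTBentmannnil\rtimes\NTBentmannss$---are used; what the paper's route buys is brevity, since those hypotheses are precisely the hypotheses under which the quoted lemma applies.
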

The basic idea of this paper is to compute the $\TorBentmann$-groups above by writing down projective resolutions for the fixed right-module $\NTBentmannss$.

Let $Z_m$ be the $(m+1)$-point space on the set $\{1,2,\ldots,m+1\}$ such that $Y\subseteq Z_m$ is open if and only if $Y\ni m+1$ or $Y=\emptyset$. A \(\CstBentmann\)\nbBentmann-algebra over~\(Z_m\) is a \(\CstBentmann\)\nbBentmann-algebra~\(A\) with a distinguished ideal such that the corresponding quotient decomposes as a direct sum of~$m$ orthogonal ideals. Let~$S$ be the set $\{1,2,3,4\}$ equipped with the topology $\{\emptyset,4,24,34,234,1234\}$, where we write $24\defeqBentmann\{2,4\}$ etc. A \CstarBentmann{}algebra over~$S$ is a \CstarBentmann{}algebra together with two distinguished ideals which need not satisfy any further conditions; see \cite{MN:BootstrapBentmann}*{Lemma 2.35}.

\begin{propositionBentmann}
	\label{pro:projdim2Bentmann}
Let $X$ be a topological space with at most~\textup{4} points. Let $M=\FKBentmann(A)$ for some \CstarBentmann{}algebra~$A$ over~$X$. Then $M$ has a projective resolution of length~\textup{2} and $\TorBentmann_2^{\NTBentmannC}(\NTBentmannss,M)=0$.

Moreover, we can find explicit formulas for $\TorBentmann_1^{\NTBentmannC}(\NTBentmannss,M)$; for instance, $\TorBentmann_1^{\NTBentmannC(Z_3)}(\NTBentmannss,M)$ is isomorphic to the homology of the complex
\begin{equation}
	\label{eq:homology_computing_Tor1Bentmann}
\bigoplus_{j=1}^3 M(j4)
\xrightarrow{\left(\begin{smallmatrix}i&-i&0\\-i&0&i\\0&i&-i\end{smallmatrix}\right)} \bigoplus_{k=1}^3 M(1234\setminus k)
\xrightarrow{\left(\begin{smallmatrix}i&i&i\end{smallmatrix}\right)} M(1234)\;.
\end{equation}
A similar formula holds for the space~$S$; see \eqref{eq:tor_sequence_for_SBentmann}.
\end{propositionBentmann}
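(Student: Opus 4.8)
The plan is to deduce everything from Proposition~\ref{pro:projdim1Bentmann} applied with $n=2$, so that the whole statement reduces to the two assertions $\TorBentmann_2^{\NTBentmannC}(\NTBentmannss,M)=0$ and $\TorBentmann_3^{\NTBentmannC}(\NTBentmannss,M)=0$: the first is free precisely because it vanishes, and together they yield, via that proposition, a projective resolution of~$M$ of length~$2$. Following the idea announced after Proposition~\ref{pro:projdim1Bentmann}, I would compute all relevant $\TorBentmann$-groups from a single fixed projective resolution of the \emph{right} module~$\NTBentmannss$ rather than from a resolution of~$M$. Before that I would cut the problem down to a finite list of spaces: filtrated $\KBentmann$-theory turns disjoint unions into products, so $\TorBentmann$ and hence the projective dimension split accordingly, and by \cite{MN:BootstrapBentmann}*{\S2.5} it is no loss of generality to take $X$ to be $T_0$. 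This leaves the finitely many connected $T_0$-spaces on at most four points; the accordion ones are already settled with projective dimension~$\le 1$ in \cite{BKBentmann}, so only the (finitely many) non-accordion spaces remain, to be handled case by case, the two for which I would record explicit complexes being~$Z_3$ and~$S$. For each I would first verify the standing hypotheses of Proposition~\ref{pro:projdim1Bentmann}, namely nilpotency of~$\NTBentmannnil$ and the decomposition $\NTBentmannC(X)=\NTBentmannnil\rtimes\NTBentmannss$.

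For the second step I would write down, for each remaining space, an explicit resolution
\[
0\to P_2\to P_1\to P_0\to\NTBentmannss\to 0
\]
of~$\NTBentmannss$ by finitely generated projective right $\NTBentmannC(X)$-modules, each~$P_i$ a direct sum of the representable projectives attached to the locally closed subsets appearing in~\eqref{eq:homology_computing_Tor1Bentmann} (and in its analogue~\eqref{eq:tor_sequence_for_SBentmann} for~$S$). Since this resolution has length~$2$, the complex obtained by tensoring it with~$M$ computes $\TorBentmann_*^{\NTBentmannC}(\NTBentmannss,M)$ and gives $\TorBentmann_n=0$ for all $n\ge 3$ at once. Using the identification of the tensor product of the representable projective at~$Y$ with the value~$M(Y)$, this tensored complex is precisely the three-term complex~\eqref{eq:homology_computing_Tor1Bentmann} in the case of~$Z_3$; its homology in the three degrees then returns $\TorBentmann_2$, $\TorBentmann_1$ and $\TorBentmann_0$, so the claimed formula for~$\TorBentmann_1$ as a middle homology is immediate once the resolution is fixed, and likewise for~$S$.

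The crux, and the only point at which the hypothesis $M=\FKBentmann(A)$ is used, is the vanishing of $\TorBentmann_2^{\NTBentmannC}(\NTBentmannss,M)$, that is, the injectivity of the leftmost differential $P_2\otimes M\to P_1\otimes M$ in~\eqref{eq:homology_computing_Tor1Bentmann}. For an arbitrary $\NTBentmannC(X)$-module this map need not be injective; what rescues the argument is that its matrix entries are the natural transformations whose composites, running through the subquotients of~$A$, assemble into the exact six-term sequences of $\KBentmann$-theory. I would therefore describe the kernel of this differential in terms of the kernels and images dictated by those six-term sequences and show, by chasing the exactness diagrams attached to each ideal and quotient, that it is trivial. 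This $\KBentmann$-theoretic exactness computation is the main obstacle: it must be carried out separately for~$Z_3$ and~$S$, and it is exactly what makes the conclusion hold for modules in the range of~$\FKBentmann$ while failing for general modules (consistently with the fact that~$\TorBentmann_1$ need not be free, so the resolution cannot in general be shortened to length~$1$). Once this is in place, Proposition~\ref{pro:projdim1Bentmann} delivers the length-$2$ resolution of~$M$ and the proof is complete.
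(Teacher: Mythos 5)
Your overall scaffolding (reduction to connected $T_0$-spaces, disposal of accordion spaces via \cite{BKBentmann}, verification of the hypotheses of Proposition~\ref{pro:projdim1Bentmann}, and computation of $\TorBentmann_*^{\NTBentmannC}(\NTBentmannss,M)$ from a fixed projective resolution of the right-module $\NTBentmannss$) matches the paper's. But the central step fails: there is \emph{no} finite projective resolution $0\to P_2\to P_1\to P_0\to\NTBentmannss\to 0$ of right $\NTBentmannC(X)$-modules for the spaces in question, so everything you build on it collapses. The right-module $\NTBentmannss$ splits as $\bigoplus_{Y\in\LCBentmann(X)^*}S_Y$, and the simple summands $S_Y$ have \emph{infinite} projective dimension: the resolutions the paper constructs in \S\ref{sec:freeresBentmann} and \S\ref{sec:projdim2Bentmann} are six-term \emph{periodic}, i.e.\ infinite. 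Concretely, for $X=Z_3$ the third syzygy of $S_{1234}$ in the resolution \eqref{eq:resolution_for_S1234Bentmann} is $\SyzBentmann_{1234}[1]$, a degree-shifted copy of the first syzygy, so that resolution never terminates; and no other finite resolution can exist, because tensoring the periodic resolution with the simple \emph{left}-module concentrated at $1234$ yields nonvanishing $\TorBentmann_{3k}^{\NTBentmannC}$ for every $k\geq 1$ (each copy of $Q_{1234}$ in degree $3k$ contributes a $\ZBentmann$ whose neighbouring terms in the tensored complex vanish, so no differential can kill it). For the same reason, your structural claim that the three-term complex \eqref{eq:homology_computing_Tor1Bentmann} returns $\TorBentmann_2$, $\TorBentmann_1$, $\TorBentmann_0$ as its three homology groups is wrong: \eqref{eq:homology_computing_Tor1Bentmann} is only the degree${}\leq 2$ part of an infinite complex, and $\TorBentmann_2$ also involves the incoming differential from $(Q_4\oplus Q_{1234}[1])\otimes_{\NTBentmannC}M$.

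This also means you have misplaced where the hypothesis $M=\FKBentmann(A)$ enters. It is not needed merely for one injectivity statement at the left end of a finite complex; it is needed to prove exactness of the tensored complex at \emph{infinitely many} (all periodic) positions, and that is exactly what yields $\TorBentmann_n^{\NTBentmannC}(\NTBentmannss,M)=0$ for all $n\geq 2$. The paper's mechanism is not a diagram chase through the six-term sequences of the subquotients of~$A$, but representability: $Q_Y\otimes_{\NTBentmannC}M\cong\KKBentmann(X;\RepBentmann_Y,A)$, and the periodic parts of the resolutions are shown (Lemmas~\ref{lem:induced_by_exact_triangleBentmann} and~\ref{lem:induced_by_exact_triangleBentmann2}, via an $\ExtBentmann^1$-computation detecting exactness of the underlying modules) to be induced by distinguished or anti-distinguished triangles of $\idealBentmann$-projective objects in $\KKBentmanncat(X)$, whence Lemma~\ref{lem:tensor_productsBentmann} provides the required long exact sequences. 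A smaller gap: your case list omits the pseudocircle $C_2$, the one connected non-accordion four-point space that genuinely requires a separate analysis; conversely, $S$ needs no separate treatment, since $\NTBentmannC(S)$ and $\NTBentmannC(Z_3)$ are isomorphic compatibly with $\NTBentmannss$, $\NTBentmannnil$ and exactness, which is how the paper reduces the whole first class of spaces to the single representative $Z_3$.
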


The situation simplifies if we consider \emph{rational} \index{$KKBentmann(X)$-theory} $\KKBentmann(X)$-theory, whose morphism groups are given by $\KKBentmann(X;A,B)\otimes\QBentmann$; see \cite{MR2812030Bentmann}. This is a $\QBentmann$-linear triangulated category which can be constructed as a localisation of $\KKBentmanncat(X)$; the corresponding localisation of filtrated $\KBentmann$-theory is given by $A\mapsto\FKBentmann(A)\otimes\QBentmann$ and takes values in the category of modules over the $\QBentmann$-linear category $\NTBentmannC(X)\otimes\QBentmann$.
\begin{propositionBentmann}
  \label{pro:rationalBentmann}
Let $X$ be a topological space with at most~\textup{4} points. Let $A$ and $B$ be \CstarBentmann{}algebras over $X$. If $A$ belongs to the equivariant bootstrap class $\BootBentmann(X)$, then there is a natural short exact \index{universal coefficient sequence} universal coefficient sequence
\begin{eqnarray*}
\ExtBentmann_{\NTBentmannC(X)\otimes\QBentmann}^1\bigl(\FKBentmann_{\ast+1}(A)\otimes\QBentmann,\FKBentmann_{\ast}(B)\otimes\QBentmann\bigr) \rightarrowtail \KKBentmann_{\ast}(X;A,B)\otimes\QBentmann\\ \twoheadrightarrow \HomBentmann_{\NTBentmannC(X)\otimes\QBentmann}\bigl(\FKBentmann_{\ast}(A)\otimes\QBentmann,\FKBentmann_{\ast}(B)\otimes\QBentmann\bigr)\;.
\end{eqnarray*}
\end{propositionBentmann}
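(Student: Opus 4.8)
The plan is to derive the sequence from the Meyer--Nest universal coefficient spectral sequence of \cites{MN:HomologicalIBentmann, MN:FiltratedBentmann}, once we know that the rationalised module $\FKBentmann(A)\otimes\QBentmann$ has projective dimension at most~$1$ over the $\QBentmann$\nbBentmann-linear category $\NTBentmannC(X)\otimes\QBentmann$. For $A$ in the bootstrap class $\BootBentmann(X)$ this spectral sequence converges to $\KKBentmann_*(X;A,B)$ and has second page $\ExtBentmann^p_{\NTBentmannC(X)}\bigl(\FKBentmann_{*}(A),\FKBentmann_*(B)\bigr)$. The construction is homological and $\QBentmann$\nbBentmann-linear, so I would transport it to the localised category $\KKBentmanncat(X)\otimes\QBentmann$ of \cite{MR2812030Bentmann}, obtaining a spectral sequence with second page $\ExtBentmann^p_{\NTBentmannC(X)\otimes\QBentmann}\bigl(\FKBentmann_{*}(A)\otimes\QBentmann,\FKBentmann_*(B)\otimes\QBentmann\bigr)$ abutting to $\KKBentmann_*(X;A,B)\otimes\QBentmann$. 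A two-column spectral sequence of this kind, supported in degrees $p=0$ and $p=1$, always degenerates to a short exact sequence of precisely the asserted shape.

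It thus remains to check that $\FKBentmann(A)\otimes\QBentmann$ has projective dimension at most~$1$. First I would note that the hypotheses of Proposition~\ref{pro:projdim1Bentmann} survive rationalisation: $\NTBentmannnil\otimes\QBentmann$ is again nilpotent and the decomposition $\NTBentmannC(X)\otimes\QBentmann=(\NTBentmannnil\otimes\QBentmann)\rtimes(\NTBentmannss\otimes\QBentmann)$ persists, so the $\QBentmann$\nbBentmann-linear analogue of that proposition applies with ``free Abelian group'' read as ``free $\QBentmann$\nbBentmann-module''. Since every $\QBentmann$\nbBentmann-vector space is free, the freeness clause of condition~(ii) becomes vacuous, and projective dimension~$\le 1$ is equivalent to the single vanishing $\TorBentmann_2^{\NTBentmannC(X)\otimes\QBentmann}(\NTBentmannss\otimes\QBentmann,\FKBentmann(A)\otimes\QBentmann)=0$. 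This is exactly the phenomenon that fails integrally, where a $\TorBentmann_1$ carrying torsion obstructs a length-one resolution.

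To secure this vanishing I would use that $\QBentmann$ is flat over~$\ZBentmann$: resolving the right-module $\NTBentmannss$ over $\NTBentmannC(X)$ and tensoring the resulting complex with~$\QBentmann$ yields a resolution of $\NTBentmannss\otimes\QBentmann$ over $\NTBentmannC(X)\otimes\QBentmann$, whence the base-change isomorphism $\TorBentmann_n^{\NTBentmannC(X)\otimes\QBentmann}(\NTBentmannss\otimes\QBentmann,M\otimes\QBentmann)\cong\TorBentmann_n^{\NTBentmannC(X)}(\NTBentmannss,M)\otimes\QBentmann$. Specialising to $M=\FKBentmann(A)$, $n=2$ and invoking the vanishing $\TorBentmann_2^{\NTBentmannC}(\NTBentmannss,\FKBentmann(A))=0$ supplied by Proposition~\ref{pro:projdim2Bentmann} gives the required rational vanishing, so that the spectral sequence collapses as above; naturality of the sequence in both variables is then inherited from that of the spectral sequence. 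I expect the main obstacle to lie not in this homological bookkeeping but in justifying the transport of the Meyer--Nest machinery to the localisation---in particular, that the bootstrap hypothesis still guarantees convergence and that rationalisation commutes with the limits implicit in the phantom-tower construction underlying the spectral sequence.
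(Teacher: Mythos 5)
Your proposal is correct and takes essentially the same route as the paper: the paper's proof likewise applies the Meyer--Nest machinery to $\FKBentmann\otimes\QBentmann$ on the localised category $\KKBentmanncat(X)\otimes\QBentmann$, reduces the claim to showing that $\FKBentmann(A)\otimes\QBentmann$ has projective dimension at most~$1$, invokes rational analogues of Propositions~\ref{pro:projdim1Bentmann} and~\ref{pro:projdim2Bentmann}, uses flatness of $\QBentmann$ over $\ZBentmann$ to turn projective $\NTBentmannC$-module resolutions into projective $\NTBentmannC\otimes\QBentmann$-module resolutions, and observes that the freeness condition on $\TorBentmann_1$ is vacuous because $\QBentmann$ is a field. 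Your explicit base-change isomorphism and the two-column degeneration argument are just more detailed renderings of the same steps the paper leaves implicit.
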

In \cite{MR2812030Bentmann}, a long exact sequence is constructed which in our setting, by the above proposition, reduces the computation of $\KKBentmann_*(X;A,B)$, up to extension problems, to the computation of a certain torsion theory $\KKBentmann_*(X;A,B;\QBentmann/\ZBentmann)$.

The next proposition says that the upper bound of~2 for the projective dimension in Proposition~\ref{pro:projdim2Bentmann} does not hold for all finite spaces.
\begin{propositionBentmann}
	\label{pro:projdim3Bentmann}
There is an $\NTBentmannC(Z_4)$-module $M$ of projective dimension~\textup{2} with free entries and $\TorBentmann_2^{\NTBentmannC}(\NTBentmannss,M)\neq 0$. The module $M\otimes_\ZBentmann \ZBentmann/k$ has projective dimension~\textup{3} for every $k\in\NBentmann_{\geq 2}$. Both~$M$ and $M\otimes_\ZBentmann \ZBentmann/k$ can be realised as the \index{filtrated $\KBentmann$-theory} filtrated $\KBentmann$-theory of an object in the equivariant bootstrap class~$\BootBentmann(X)$.
\end{propositionBentmann}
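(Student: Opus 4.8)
The plan is to route everything through Proposition~\ref{pro:projdim1Bentmann}, so that every statement about projective dimension becomes a statement about the groups $\TorBentmann_n^{\NTBentmannC}(\NTBentmannss,-)$ (after first noting that the standing hypotheses of that proposition---nilpotence of $\NTBentmannnil$ and the semidirect decomposition $\NTBentmannC(Z_4)=\NTBentmannnil\rtimes\NTBentmannss$---do hold over $Z_4$). Following the ``basic idea'' paragraph after Proposition~\ref{pro:projdim1Bentmann}, I would fix once and for all a projective resolution $P_\bullet\twoheadrightarrow\NTBentmannss$ of the right $\NTBentmannC(Z_4)$-module $\NTBentmannss$. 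The groups $\TorBentmann_n^{\NTBentmannC}(\NTBentmannss,M)$ are then computed as the homology of the complex of abelian groups $P_\bullet\otimes_{\NTBentmannC(Z_4)}M$, assembled from the entries $M(Y)$ indexed by the connected locally closed subsets $Y\subseteq Z_4$; this is the five-point analogue of the complex~\eqref{eq:homology_computing_Tor1Bentmann}.

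The heart of the argument is to exhibit one explicit module $M$ with free entries for which this complex has non-vanishing homology in degree~$2$ and vanishing homology in degrees~$\geq 3$. Concretely, I would specify the groups $M(Y)$ and the action of the index, restriction and boundary transformations by hand---mimicking the $Z_3$-pattern of~\eqref{eq:homology_computing_Tor1Bentmann} but using the fourth closed point of $Z_4$ to produce one extra relation that is not a consequence of the others---or, equivalently, take $M$ to be a suitable syzygy of a simple module. One then checks directly that $\TorBentmann_2^{\NTBentmannC}(\NTBentmannss,M)$ is a non-zero free abelian group while $\TorBentmann_n^{\NTBentmannC}(\NTBentmannss,M)=0$ for $n\geq 3$. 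By Proposition~\ref{pro:projdim1Bentmann} (with $n=2$, and with $n=1$ ruled out because $\TorBentmann_2\neq 0$) this gives $M$ projective dimension exactly~$2$.

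For the torsion module I would exploit that $M$ has free entries, so that the sequence of $\NTBentmannC(Z_4)$-modules
\[
0\to M\xrightarrow{\,k\,}M\to M\otimes_\ZBentmann\ZBentmann/k\to 0
\]
is exact. Applying $\TorBentmann_\bullet^{\NTBentmannC}(\NTBentmannss,-)$ and using $\TorBentmann_n^{\NTBentmannC}(\NTBentmannss,M)=0$ for $n\geq 3$, the long exact sequence breaks into
\[
0\to \TorBentmann_2^{\NTBentmannC}(\NTBentmannss,M)/k\to \TorBentmann_2^{\NTBentmannC}(\NTBentmannss,M\otimes_\ZBentmann\ZBentmann/k)\to \ker\!\bigl(k\colon\TorBentmann_1\to\TorBentmann_1\bigr)\to 0
\]
together with $\TorBentmann_3^{\NTBentmannC}(\NTBentmannss,M\otimes_\ZBentmann\ZBentmann/k)\cong\ker(k\colon\TorBentmann_2\to\TorBentmann_2)=0$ (using freeness of $\TorBentmann_2$ and $k\geq 2$) and $\TorBentmann_4^{\NTBentmannC}(\NTBentmannss,M\otimes_\ZBentmann\ZBentmann/k)=0$. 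Since $\TorBentmann_2^{\NTBentmannC}(\NTBentmannss,M)$ is free and non-zero, the quotient $\TorBentmann_2^{\NTBentmannC}(\NTBentmannss,M)/k\cong(\ZBentmann/k)^{r}$ with $r\geq 1$ shows that $\TorBentmann_2^{\NTBentmannC}(\NTBentmannss,M\otimes_\ZBentmann\ZBentmann/k)$ is not free; hence Proposition~\ref{pro:projdim1Bentmann} excludes projective dimension~$\leq 2$, while the vanishing of $\TorBentmann_3$ and $\TorBentmann_4$ supplies a resolution of length~$3$. Thus $M\otimes_\ZBentmann\ZBentmann/k$ has projective dimension exactly~$3$ for every $k\in\NBentmann_{\geq 2}$.

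Finally, for realisability I would invoke the range result for filtrated $\KBentmann$-theory on the bootstrap class: because $M$ has free entries and a finite projective resolution, it is isomorphic to $\FKBentmann(A)$ for some $A\in\BootBentmann(Z_4)$, built by an iterated mapping-cone and extension construction realising $P_\bullet$. The torsion module is then realised by the mapping cone of $k\cdot\idBentmann_A$: in $\KKBentmann(Z_4)$ this cone triangle induces precisely the short exact sequence above on filtrated $\KBentmann$-theory (the connecting maps vanish since the entries of $M$ are free and $k$ acts injectively), so $\FKBentmann\bigl(\ConeBentmann(k\cdot\idBentmann_A)\bigr)\cong M\otimes_\ZBentmann\ZBentmann/k$ up to the standard degree shift, and this cone again lies in $\BootBentmann(Z_4)$. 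The main obstacle is the explicit bookkeeping of the second paragraph: one must correctly identify the connected locally closed subsets of $Z_4$ and the matrices of index, restriction and boundary maps, produce a candidate $M$ with free entries, and verify by a finite but delicate linear-algebra computation over $\ZBentmann$ that the homology of $P_\bullet\otimes_{\NTBentmannC(Z_4)}M$ is concentrated---non-trivially---in degree~$2$. Everything downstream, namely the $\ZBentmann/k$-computation and the realisation, is then essentially formal.
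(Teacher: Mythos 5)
Your proposal stops short at exactly the point where the proposition has content: you never produce the module $M$, and you defer the verification that $\TorBentmann_2^{\NTBentmannC}(\NTBentmannss,M)\neq 0$ to ``a finite but delicate linear-algebra computation'' that is not carried out. Since Proposition~\ref{pro:projdim3Bentmann} is an existence statement, that computation \emph{is} the proof, not bookkeeping. Your route also creates more work than necessary: to get the upper bound on the projective dimension from Proposition~\ref{pro:projdim1Bentmann} with $n=2$ you would have to show that $\TorBentmann_2^{\NTBentmannC}(\NTBentmannss,M)$ is free \emph{and} that $\TorBentmann_3^{\NTBentmannC}(\NTBentmannss,M)$ vanishes, which means computing $\TorBentmann_n^{\NTBentmannC}(S_Y,M)$ for \emph{every} $Y\in\LCBentmann(Z_4)^*$, not just for the exceptional subset. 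The paper avoids all of this by \emph{defining} $M$ as the cokernel of an explicit injective map of free left modules, $0\to P_{12345}\to\bigoplus_{1\leq i\leq 4}P_{12345\setminus i}\to\bigoplus_{1\leq j<k\leq 4}P_{jk5}\epiBentmann M$ as in \eqref{eq:length2resolutionBentmann}; then the length-$2$ resolution, hence the bound on the projective dimension and the vanishing of all higher $\TorBentmann$-groups, come for free, the entries are computed to be free abelian groups, and a single computation remains, namely $\TorBentmann_2^{\NTBentmannC}(S_{12345},M)\cong\ZBentmann$, done by a rank argument against the resolution of the simple right module $S_{12345}$, whose first \emph{two} steps are non-periodic (this is precisely where $Z_4$ differs from $Z_3$). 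If you want your ``suitable syzygy of a simple module'' variant to succeed, this cokernel construction is the form it must take.

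The realisation step contains a second genuine gap: the ``range result'' you invoke does not exist in the generality you need. The realisation theorem in the literature, \cite{MN:FiltratedBentmann}*{Theorem 4.11}, applies to modules admitting projective resolutions of length~$1$, and for non-accordion spaces no formal range theorem in higher projective dimension is available. The paper's construction is designed around this: first realise the syzygy $N=\cokerBentmann\bigl(\FKBentmann(f_2)\bigr)$, which \emph{does} have projective dimension $\leq 1$, by Theorem 4.11; then---and this is the step your ``iterated mapping-cone and extension construction'' hides---lift the module inclusion $\gamma\colon N\rightarrowtail\FKBentmann(P^0)$ to an honest element $f\in\KKBentmann(X;A,P^0)$ using the \emph{universal coefficient theorem}, which is applicable precisely because $\FKBentmann(A)\cong N$ has projective dimension $\leq 1$, and finally take the mapping cone of the $\idealBentmann$-monic morphism~$f$. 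That lifting is not formal, and its dependence on the length-$1$ hypothesis is the reason the construction cannot simply be iterated to arbitrary projective dimension. By contrast, your two other deviations from the paper are correct and arguably cleaner: the long exact $\TorBentmann$-sequence of $0\to M\xrightarrow{\;k\;}M\to M\otimes_\ZBentmann\ZBentmann/k\to 0$ gives both the lower bound (the degree-$2$ group acquires $k$-torsion, hence is non-free) and the upper bound ($\TorBentmann_3$ and $\TorBentmann_4$ vanish, so Proposition~\ref{pro:projdim1Bentmann} applies with $n=3$), where the paper instead writes down an explicit length-$3$ resolution; and realising $M\otimes_\ZBentmann\ZBentmann/k$ as $\ConeBentmann(k\cdot\idBentmann_A)$ is valid, since the connecting maps vanish because the entries of~$M$ are free, replacing the paper's K\"unneth argument with the Cuntz algebra $\CuntzBentmann_{k+1}$.
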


As an application of Proposition~\ref{pro:projdim2Bentmann} we investigate in~\S\ref{sec:Cuntz-KriegerBentmann} the obstruction term $\TorBentmann_1^{\NTBentmannC}\bigbBentmann{\NTBentmannss,\FKBentmann(A)}$ for certain Cuntz-Krieger algebras with four-point primitive ideal spaces. We find:
\begin{propositionBentmann}
	\label{pro:CKBentmann}
There is a \index{Cuntz-Krieger algebra} Cuntz-Krieger algebra with primitive ideal space homeomorphic to~$Z_3$ which fulfills Cuntz's condition \textup{(II)} and has projective dimension~\textup{2} in filtrated $\KBentmann$-theory over~$Z_3$. The analogous statement for the space~$S$ holds as well.
\end{propositionBentmann}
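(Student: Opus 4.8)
The plan is to reduce the statement to a single torsion computation and then to realise it by an explicit graph. By Proposition~\ref{pro:projdim2Bentmann}, the module $M=\FKBentmann(A)$ attached to any \CstarBentmann algebra over a space with at most four points already satisfies $\TorBentmann_2^{\NTBentmannC}(\NTBentmannss,M)=0$ and has projective dimension at most~$2$. Applying Proposition~\ref{pro:projdim1Bentmann} with $n=1$, such an $M$ has projective dimension at most~$1$ precisely when $\TorBentmann_1^{\NTBentmannC}(\NTBentmannss,M)$ is a free Abelian group. It therefore suffices to produce a Cuntz--Krieger algebra $A$ with $\PrimBentmann(A)\cong Z_3$ that satisfies Cuntz's condition~(II) and for which $\TorBentmann_1^{\NTBentmannC(Z_3)}(\NTBentmannss,\FKBentmann(A))$ has nonzero torsion; the projective dimension is then forced to equal~$2$. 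The torsion group I would read off from the complex~\eqref{eq:homology_computing_Tor1Bentmann}.

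First I would write down a finite directed graph with four strongly connected components $C_1,C_2,C_3,C_4$ whose reachability order reproduces the specialisation order of $Z_3$: the components $C_1,C_2,C_3$ are pairwise incomparable, each emits edges into the common sink component $C_4$, and no edges run between $C_1,C_2,C_3$. The hereditary saturated vertex sets then form exactly the open-set lattice of $Z_3$, with $C_4$ corresponding to the generic point~$4$, so that $\PrimBentmann(A)\cong Z_3$; equipping each component with at least two return paths arranges condition~(II). Although Cuntz--Krieger matrices are $(0,1)$\nbBentmann-valued, I may use a graph with higher edge multiplicities, since the graph \CstarBentmann algebra of a finite graph without sinks or sources is the Cuntz--Krieger algebra of its $(0,1)$\nbBentmann-valued edge matrix. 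For such a graph algebra every entry $M(Y)$ of $M=\FKBentmann(A)$ is the $\KBentmann$-theory of the subquotient attached to the corresponding block, computed as $\cokerBentmann(1-B^{\mathrm t})$ in degree~$0$ and $\kerBentmann(1-B^{\mathrm t})$ in degree~$1$ for the relevant vertex adjacency matrix~$B$.

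The free parameters are the multiplicities of edges from each $C_j$ into $C_4$. These govern the entries $M(j4)$, $M(1234\setminus k)$ and $M(1234)$ together with the maps labelled~$i$ in~\eqref{eq:homology_computing_Tor1Bentmann}, and I would tune them so that one of the index maps entering the complex becomes multiplication by an integer~$\geq 2$. With the matrix fixed the remainder is a finite computation: substitute the subquotient $\KBentmann$-groups and maps into~\eqref{eq:homology_computing_Tor1Bentmann}, compute its homology, and check that the torsion subgroup is nonzero. This gives $\TorBentmann_1^{\NTBentmannC(Z_3)}(\NTBentmannss,\FKBentmann(A))$ non-free, hence projective dimension exactly~$2$. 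For the space~$S$ I would run the identical argument with the analogous complex~\eqref{eq:tor_sequence_for_SBentmann} and a four-component graph realising the diamond order underlying~$S$ (one source component dominating two incomparable components, both dominating a common sink).

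I expect the main obstacle to be the simultaneous fulfilment of all three constraints by a single graph: realising the prescribed ideal lattice, satisfying condition~(II), and forcing torsion into $\TorBentmann_1$ through the maps of~\eqref{eq:homology_computing_Tor1Bentmann}. The edge-matrix passage removes the $(0,1)$\nbBentmann-restriction, so the genuine difficulty is to choose the edge multiplicities so that the cokernel appearing in the middle homology of~\eqref{eq:homology_computing_Tor1Bentmann} is torsion rather than free, while keeping each component rich enough for condition~(II); verifying afterwards that the passage to the edge matrix alters neither $\PrimBentmann(A)$ nor $\FKBentmann(A)$ is routine but must be recorded.
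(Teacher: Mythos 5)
Your reduction and general set-up coincide with the paper's own proof: Proposition~\ref{pro:projdim2Bentmann} gives $\TorBentmann_2^{\NTBentmannC}(\NTBentmannss,M)=0$ and projective dimension at most~$2$, so by Proposition~\ref{pro:projdim1Bentmann} with $n=1$ the dimension equals~$2$ exactly when $\TorBentmann_1^{\NTBentmannC}(\NTBentmannss,M)$ fails to be free; the paper likewise builds a graph whose strongly connected components realise the specialisation order of~$Z_3$ (three incomparable components feeding a common hereditary one), passes to the edge graph to obtain a genuine Cuntz--Krieger algebra from a multigraph, and derives condition~(II) from condition~(K) by placing several cycles at every vertex.

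The gap is that your argument stops precisely where the actual proof begins. The proposition is an existence statement, and its entire content is that your three constraints \emph{can} be met simultaneously; the proposal defers exactly this point (``tune the multiplicities \dots{} check that the torsion subgroup is nonzero'') without exhibiting any matrix or proving that some admissible choice produces torsion. A priori it is conceivable that exactness of the six-term sequences of a graph algebra, together with the prescribed ideal lattice, forces the homology of \eqref{eq:homology_computing_Tor1Bentmann} to be free---ruling this out \emph{is} the theorem, and is also why the proposition matters for Restorff's classification programme. The paper settles it by writing down the explicit $8$-vertex adjacency matrix \eqref{eq:CK-matrixBentmann} and performing a nontrivial verification: it embeds the odd-degree complex of kernels into a short exact sequence of complexes $K_\bullet\rightarrowtail Z_\bullet\twoheadrightarrow I_\bullet$, uses the long exact homology sequence to identify $\TorBentmann_1^{\mathrm{odd}}$ with $\bigl(\ker(f)\cap\imBentmann(\phi_0)\bigr)/\phi_0\bigl(\ker(f)\bigr)$, and computes this group to be $\ZBentmann/2$. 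Note in particular that the torsion there does not arise because some index map is multiplication by an integer $\geq 2$ (the maps in the complex are signed identity blocks restricted to kernels); it arises from the discrepancy between the image of $B_4'$ on the diagonal subgroup and the larger subgroup reached once the kernels of the $B_j'$ are pushed through the $X_j^{\mathrm t}$, so your proposed tuning mechanism is not the one that actually works. Finally, the case of~$S$ is not ``identical'': the complex \eqref{eq:tor_sequence_for_SBentmann} involves $r$- and $\delta$-maps as well as $i$-maps, the relevant part of the Tor-group is the even one, and the paper states explicitly that the computation scheme used for $Z_3$ does not carry over, resorting to an ad hoc calculation. Your plan would succeed if executed---the paper's example lies inside your search space---but as written it is a search strategy, not a proof.
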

The relevance of this observation lies in the following: \emph{if} Cuntz-Krieger algebras \emph{had} projective dimension at most~1 in filtrated $\KBentmann$-theory over their primitive ideal space, this would lead to a  strengthened version of Gunnar Restorff's classification result~\cite{RestorffBentmann} with a proof avoiding reference to results from symbolic dynamics.

\section{Preliminaries}
\label{sec:preliminariesBentmann}
Let $X$ be a finite topological space. A subset $Y\subseteq X$ is called \emph{locally closed} if it is the difference $U\setminus V$ of two open subsets~$U$ and~$V$ of~$X$; in this case, $U$~and~$V$ can always be chosen such that $V\subseteq U$. The set of locally closed subsets of $X$ is denoted by $\LCBentmann(X)$. By $\LCBentmann(X)^*$, we denote the set of \emph{non-empty, connected} locally closed subsets of~$X$.

Recall from \cite{MN:BootstrapBentmann} that a \index{\CstarBentmann algebra over~\(X\)} \emph{\CstarBentmann algebra over~\(X\)} is pair \((A,\psi)\) consisting of a \CstarBentmann{}al\-ge\-bra~\(A\) and a continuous map \(\psi\colon \PrimBentmann(A)\to X\).
A \CstarBentmann algebra \((A,\psi)\) over~\(X\) is called \emph{tight} if the map $\psi$ is a homeomorphism.
A \CstarBentmann algebra \((A,\psi)\) over~\(X\) comes with \emph{distinguished subquotients} $A(Y)$ for every $Y\in\LCBentmann(X)$.

There is an appropriate version $\KKBentmann(X)$ of \index{$KKBentmann(X)$-theory} bivariant $\KBentmann$-theory for \CstarBentmann{}al\-ge\-bras over~$X$ (see \cites{KirchbergBentmann,MN:BootstrapBentmann}). The corresponding category, denoted by $\KKBentmanncat(X)$, is equipped with the structure of a triangulated category (see~\cite{NeemanBentmann}); moreover, there is an equivariant analogue $\BootBentmann(X)\subseteq\KKBentmanncat(X)$ of the bootstrap class~\cite{MN:BootstrapBentmann}.

Recall that a triangulated category comes with a class of distinguished candidate triangles. An \emph{anti-distinguished} triangle is a candidate triangle which can be obtained from a distinguished triangle by reversing the sign of one of its three morphisms. Both distinguished and anti-distinguished triangles induce long exact $\HomBentmann$-sequences.

As defined in \cite{MN:FiltratedBentmann}, for $Y\in\LCBentmann(X)$,
we let $\FKBentmann_Y(A)\defeqBentmann\KBentmann_*\bigl(A(Y)\bigr)$ denote the \(\ZBentmann/2\)-graded $\KBentmann$-group of the
subquotient of $A$ associated to~$Y$.
Let~\(\NTBentmann(X)\) be the \(\ZBentmann/2\)-graded pre-additive category whose
  object set is \(\LCBentmann(X)\) and whose space of morphisms from \(Y\) to \(Z\)
  is \(\NTBentmann_*(X)(Y,Z)\) -- the  \(\ZBentmann/2\)-graded Abelian group of all natural transformations
  \(\FKBentmann_Y\Rightarrow \FKBentmann_Z\).
Let~\(\NTBentmannC(X)\) be the full subcategory with object set \(\LCBentmann(X)^*\). We often abbreviate $\NTBentmannC(X)$ by $\NTBentmannC$.

Every open subset of a locally closed subset of $X$ gives rise to an extension of distinguished subquotients. The corresponding natural maps in the associated six-term exact sequence yield morphisms in the category $\NTBentmann$, which we briefly denote by $i$, $r$ and~$\delta$.

A \emph{\textup (left-\textup)module} over~\(\NTBentmann(X)\) is a grading-preserving,
additive functor from \(\NTBentmann(X)\) to the category \(\AbBentmann^{\ZBentmann/2}\) of \(\ZBentmann/2\)-graded Abelian groups. A morphism of $\NTBentmann(X)$-modules is a natural transformation of functors. Left-modules over $\NTBentmannC(X)$ are defined similarly. By \(\ModBentmannc{\NTBentmannC(X)}\) we denote the category of countable $\NTBentmannC(X)$-modules.

\index{filtrated $\KBentmann$-theory} \emph{Filtrated $\KBentmann$-theory} is the functor $\KKBentmanncat(X)\to\ModBentmannc{\NTBentmannC(X)}$ taking a \CstarBentmann{}algebra $A$ over $X$ to the collection $\bigl(\KBentmann_*(A(Y))\bigr)_{Y\in\LCBentmann(X)^*}$ with the obvious $\NTBentmannC(X)$-module structure.

Let $\NTBentmannnil\subset\NTBentmannC$ be the ideal generated by all natural transformations between different objects, and let $\NTBentmannss\subset\NTBentmannC$ be the subgroup spanned by the identity transformations $\idBentmann_Y^Y$ for objects $Y\in\LCBentmann(X)^*$. The subgroup $\NTBentmannss$ is in fact a subring of $\NTBentmannC$ isomorpic to $\ZBentmann^{\LCBentmann(X)^*}$. We say that $\NTBentmannC$ decomposes as semi-direct product $\NTBentmannC=\NTBentmannnil\rtimes\NTBentmannss$ if $\NTBentmannC$ as an Abelian group is the inner direct sum of $\NTBentmannnil$ and $\NTBentmannss$; see \cites{BKBentmann, MN:FiltratedBentmann}. We do not know if this fails for any finite space.

We define \emph{right-modules} over~\(\NTBentmannC(X)\) as \emph{contravariant}, grading-preserving,
  additive functors \(\NTBentmannC(X)\to\AbBentmann^{\ZBentmann/2}\).
If we do not specify between left and right, then we always mean left-modules.
The subring $\NTBentmannss\subset\NTBentmannC$ is regarded as an $\NTBentmannC$-right-module by the obvious action: The ideal $\NTBentmannnil\subset\NTBentmannC$ acts trivially, while $\NTBentmannss$ acts via right-multiplication in $\NTBentmannss\cong\ZBentmann^{\LCBentmann(X)^*}$. For an $\NTBentmannC$-module $M$, we set $\MssBentmann\defeqBentmann M/\NTBentmannnil\cdot M$.

For $Y\in\LCBentmann(X)^*$ we define the \emph{free $\NTBentmannC$-left-module on $Y$} by $P_Y(Z)\defeqBentmann\NTBentmann(Y,Z)$ for all $Z\in\LCBentmann(X)^*$ and similarly for morphisms $Z\to Z'$ in $\NTBentmannC$.
Analogously, we define the \emph{free $\NTBentmannC$-right-module on $Y$} by $Q_Y(Z)\defeqBentmann\NTBentmann(Z,Y)$ for all $Z\in\LCBentmann(X)^*$.
An $\NTBentmannC$-left/right-module is called \emph{free} if it is isomorphic to a direct
sum of degree-shifted free left/right-modules on objects $Y\in\LCBentmann(X)^*$.
It follows directly from Yoneda's Lemma that free $\NTBentmannC$-left/right-modules are projective.

An $\NTBentmann$-module $M$ is called \emph{exact} if the $\ZBentmann/2$-graded chain complexes
\[
\cdots\to M(U)\xrightarrow{i_U^Y} M(Y)\xrightarrow{r_Y^{Y\setminus U}}
M(Y\setminus U)\xrightarrow{\delta_{Y\setminus U}^U} M(U)[1]\to\cdots
\]
are exact for all $U,Y\in\LCBentmann(X)$ with~$U$ open in~$Y$.
An $\NTBentmannC$-module $M$ is called \emph{exact} if the corresponding
$\NTBentmann$-module is exact (see \cite{BKBentmann}).

We use the notation $C\ininBentmann\mathcal C$ to denote that~$C$ is an object in a category~$\mathcal C$.

In \cite{MN:FiltratedBentmann}, the functors $\FKBentmann_Y$ are shown to be representable, that is, there are objects $\RepBentmann_Y\ininBentmann\KKBentmanncat(X)$ and isomorphisms of functors $\FKBentmann_Y\cong\KKBentmann(X;\RepBentmann_Y,\blankBentmann)$. We let $\widehat{\FKBentmann}$ denote the stable \emph{co}homological functor on $\KKBentmanncat(X)$ represented by the same set of objects $\{\RepBentmann_Y\mid Y\in\LCBentmann(X)^*\}$; it takes values in $\NTBentmannC$-\emph{right}-modules. We warn that $\KKBentmann(X;A,\RepBentmann_Y)$ does not identify with the $\KBentmann$-homology of $A(Y)$. By Yoneda's lemma, we have $\FKBentmann(\RepBentmann_Y)\cong P_Y$ and $\widehat{\FKBentmann}(\RepBentmann_Y)\cong Q_Y$.

We occasionally use terminology from \cites{MN:HomologicalIBentmann,MN:FiltratedBentmann} concerning homological algebra in $\KKBentmanncat(X)$ relative to the ideal $\idealBentmann\defeqBentmann\ker(\FKBentmann)$ of morphisms in $\KKBentmanncat(X)$ inducing trivial module maps on $\FKBentmann$. An object $A\ininBentmann\KKBentmanncat(X)$ is called \emph{$\idealBentmann$-projective} if $\idealBentmann(A,B)=0$ for every $B\ininBentmann\KKBentmanncat(X)$. We recall from~\cite{MN:HomologicalIBentmann} that $\FKBentmann$ restricts to an equivalence of categories between the subcategories of $\idealBentmann$-projective objects in $\KKBentmanncat(X)$ and of projective objects in \(\ModBentmannc{\NTBentmannC(X)}\). Similarly, the functor~$\widehat{\FKBentmann}$ induces a contravariant equivalence between the $\idealBentmann$-projective objects in $\KKBentmanncat(X)$ and projective $\NTBentmannC$-\emph{right}-modules.

\section{Proof of Proposition \ref{pro:projdim1Bentmann}}
	\label{sec:projdim1Bentmann}

Recall the following result from \cite{MN:FiltratedBentmann}.

\begin{lemmaBentmann}[\cite{MN:FiltratedBentmann}*{Theorem 3.12}]
	\label{lem:projmodulesBentmann}
Let $X$ be a finite topological space. Assume that the ideal $\NTBentmannnil\subset\NTBentmannC(X)$ is nilpotent and that the decomposition $\NTBentmannC(X)=\NTBentmannnil\rtimes\NTBentmannss$ holds.
Let $M$ be an $\NTBentmannC(X)$-module. The following assertions are equivalent:
\begin{enumerate}[label=\textup{(\arabic*)}]
\item
$M$ is a free $\NTBentmannC(X)$-module.
\item
$M$ is a projective $\NTBentmannC(X)$-module.
\item
$\MssBentmann$ is a free Abelian group and $\TorBentmann_1^{\NTBentmannC(X)}(\NTBentmannss,M)=0$.
\end{enumerate}
\end{lemmaBentmann}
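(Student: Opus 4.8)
The plan is to prove the cycle of implications $(1)\Rightarrow(2)\Rightarrow(3)\Rightarrow(1)$, with essentially all of the content concentrated in the last implication, which is a Nakayama-type lifting argument driven by the nilpotence of $\NTBentmannnil$ together with the vanishing of $\TorBentmann_1$. The implication $(1)\Rightarrow(2)$ is immediate from the fact, already recorded above, that free $\NTBentmannC$-modules are projective by Yoneda's Lemma.

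For $(2)\Rightarrow(3)$, I would first identify the functor $M\mapsto\MssBentmann=M/\NTBentmannnil\cdot M$ with $\NTBentmannss\otimes_{\NTBentmannC}\blankBentmann$, using that the decomposition $\NTBentmannC=\NTBentmannnil\rtimes\NTBentmannss$ realises $\NTBentmannss$ as the right-module quotient $\NTBentmannC/\NTBentmannnil$. Hence $\MssBentmann=\TorBentmann_0^{\NTBentmannC}(\NTBentmannss,M)$, and by balancing of $\TorBentmann$ the higher groups $\TorBentmann_n^{\NTBentmannC}(\NTBentmannss,M)$ vanish for $n\geq 1$ as soon as $M$ is projective; in particular $\TorBentmann_1^{\NTBentmannC}(\NTBentmannss,M)=0$. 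That $\MssBentmann$ is free Abelian follows once I compute $(P_Y)_{\sss}$ for a free module on $Y$: a short calculation using that $\NTBentmannnil$ is an ideal shows $(P_Y)_{\sss}$ is the free Abelian group on the single generator $\idBentmann_Y^Y$, concentrated at the object $Y$. Thus $F_{\sss}$ is free Abelian for every free $F$; a projective $M$ is a direct summand of such an $F$, so $\MssBentmann$ is a direct summand of a free Abelian group and hence itself free (projective equals free over the PID $\ZBentmann$).

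The heart of the matter is $(3)\Rightarrow(1)$. Given that $\MssBentmann$ is free Abelian, I would fix a homogeneous $\ZBentmann$-basis and lift each basis vector to a homogeneous element of $M$ at its object $Y$; assembling the lifts yields a map $\phi\colon F\to M$ from the free module $F$ that is the direct sum of the corresponding degree-shifted free modules $P_Y$. By the computation of $(P_Y)_{\sss}$, the map $\phi$ induces an isomorphism $F_{\sss}\xrightarrow{\;\cong\;}\MssBentmann$. To promote this to an isomorphism $\phi$, I first treat surjectivity: the cokernel $C=\cokerBentmann\phi$ has $C_{\sss}=0$ by right exactness of $(\blankBentmann)_{\sss}$, so $C=\NTBentmannnil\cdot C$, and nilpotence of $\NTBentmannnil$ forces $C=0$. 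For injectivity I set $K=\kerBentmann\phi$ and feed the short exact sequence $0\to K\to F\xrightarrow{\phi}M\to 0$ into the long exact $\TorBentmann^{\NTBentmannC}(\NTBentmannss,\blankBentmann)$-sequence. Since $F$ is projective, this reads
\[
0\to\TorBentmann_1^{\NTBentmannC}(\NTBentmannss,M)\to K_{\sss}\to F_{\sss}\xrightarrow{\;\cong\;}\MssBentmann\to 0.
\]
The vanishing of $\TorBentmann_1^{\NTBentmannC}(\NTBentmannss,M)$ together with the injectivity of $F_{\sss}\to\MssBentmann$ gives $K_{\sss}=0$, whence $K=\NTBentmannnil\cdot K$ and, by nilpotence once more, $K=0$. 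Therefore $\phi$ is an isomorphism and $M\cong F$ is free.

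The main obstacle I anticipate is the bookkeeping in $(3)\Rightarrow(1)$: making the lifting of a possibly infinite basis precise in the \(\ZBentmann/2\)-graded, object-indexed setting, and justifying both Nakayama closures $C=\NTBentmannnil^{k}C=0$ and $K=\NTBentmannnil^{k}K=0$ objectwise. This is exactly the point at which the two standing hypotheses enter: nilpotence of $\NTBentmannnil$ is what terminates the Nakayama iteration, while the semidirect-product decomposition $\NTBentmannC=\NTBentmannnil\rtimes\NTBentmannss$ is what identifies $(\blankBentmann)_{\sss}$ with $\NTBentmannss\otimes_{\NTBentmannC}\blankBentmann$, so that the $\TorBentmann$ long exact sequence—and hence the precise role of the hypothesis $\TorBentmann_1^{\NTBentmannC}(\NTBentmannss,M)=0$—is available.
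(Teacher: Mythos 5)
Your proof is correct, but note that the paper itself gives no proof of this lemma: it is quoted from \cite{MN:FiltratedBentmann}*{Theorem 3.12}, and your argument---identifying $(\blankBentmann)_\sss$ with $\NTBentmannss\otimes_{\NTBentmannC}\blankBentmann$ via the semidirect decomposition, computing $(P_Y)_\sss$, lifting a homogeneous basis of $\MssBentmann$ to a map $\phi\colon F\to M$, and killing $\cokerBentmann\phi$ and $\kerBentmann\phi$ by the Nakayama-type iteration using nilpotency of $\NTBentmannnil$ together with the $\TorBentmann$ long exact sequence---is essentially the original Meyer--Nest proof. So the proposal matches the (cited) source's approach and needs nothing further.
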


Now we prove Proposition~\textup{\ref{pro:projdim1Bentmann}}.
We consider the case $n=1$ first.
Choose an epimorphism $f\colon P\twoheadrightarrow M$ for some projective module $P$, and let $K$ be its kernel. $M$ has a projective resolution of length~1 if and only if $K$ is projective. By Lemma~\ref{lem:projmodulesBentmann}, this is equivalent to $K_\sss$ being a free Abelian group and $\TorBentmann_1^{\NTBentmannC}(\NTBentmannss,K)=0$. We have $\TorBentmann_1^{\NTBentmannC}(\NTBentmannss,K)=0$ if and only if $\TorBentmann_2^{\NTBentmannC}(\NTBentmannss,M)=0$ because these groups are isomorphic. We will show that $K_\sss$ is free if and only if $\TorBentmann_1^{\NTBentmannC}(\NTBentmannss,M)$ is free. The extension $K\rightarrowtail P\twoheadrightarrow M$ induces the following long exact sequence:
\begin{equation*}
	\label{eq:torsequenceBentmann}
0\to\TorBentmann_1^\NTBentmannC(\NTBentmannss,M)\to K_\sss\to P_\sss
\to\MssBentmann\to 0\;.
\end{equation*}
Assume that $K_\sss$ is free. Then its subgroup $\TorBentmann_1^{\NTBentmannC}(\NTBentmannss,M)$ is free as well. Conversely, if $\TorBentmann_1^{\NTBentmannC}(\NTBentmannss,M)$ is free, then $K_\sss$ is an extension of free Abelian groups and thus free. Notice that $P_\sss$ is free because $P$ is projective. The general case $n\in\NBentmann$ follows by induction using an argument based on syzygies as above. This completes the proof of Proposition~\textup{\ref{pro:projdim1Bentmann}}.

\section{Free Resolutions for \texorpdfstring{$\NTBentmannss$}{}}
	\label{sec:freeresBentmann}
The $\NTBentmannC$-right-module $\NTBentmannss$ decomposes as a direct sum $\bigoplus_{Y\in\LCBentmann(X)^*} S_Y$ of the simple submodules $S_Y$ which are given by $S_Y(Y)\cong\ZBentmann$ and $S_Y(Z)=0$ for $Z\neq Y$. We obtain
\[
 \TorBentmann_n^{\NTBentmannC}(\NTBentmannss,M)=\bigoplus_{Y\in\LCBentmann(X)^*} \TorBentmann_n^{\NTBentmann}(S_Y,M)\;.
\]
Our task is then to write down projective resolutions for the $\NTBentmannC$-right-modules $S_Y$. The first step is easy: we map $Q_Y$ onto $S_Y$ by mapping the class of the identity in $Q_Y(Y)$ to the generator of $S_Y(Y)$. Extended by zero, this yields an epimorphism $Q_Y\twoheadrightarrow S_Y$.

In order to surject onto the kernel of this epimorphism, we use the indecomposable transformations in $\NTBentmannC$ whose range is~$Y$. Denoting these by $\eta_i\colon W_i\to Y$, $1\leq i\leq n$, we obtain the two step resolution
\[
 \bigoplus_{i=1}^n Q_{W_i}\xrightarrow{\left(\begin{smallmatrix}\eta_1&\eta_2&\cdots&\eta_n\end{smallmatrix}\right)}Q_Y\twoheadrightarrow S_Y\;.
\]
In the notation of \cite{MN:FiltratedBentmann}, the map $\bigoplus_{i=1}^n Q_{W_i}\to Q_Y$ corresponds to a morphism $\phi\colon\RepBentmann_Y\to\bigoplus_{i=1}^n \RepBentmann_{W_i}$ of $\idealBentmann$-projectives in $\KKBentmanncat(X)$. If the mapping cone $C_\phi$ of~$\phi$ is again $\idealBentmann$-projective, the distinguished triangle $\Sigma C_\phi\to\RepBentmann_Y\xrightarrow{\phi}\bigoplus_{i=1}^n \RepBentmann_{W_i}\to C_\phi$ yields the projective resolution
\[
\cdots\to Q_Y\to Q_\phi[1]\to\bigoplus_{i=1}^n Q_{W_i}[1]\to Q_Y[1]\to Q_\phi\to\bigoplus_{i=1}^n Q_{W_i}\to Q_Y\twoheadrightarrow S_Y\;,
\]
where $Q_\phi=\FKBentmann(C_\phi)$. We denote periodic resolutions like this by
\[
 \xymatrix{Q_\phi\ar[r] & \bigoplus_{i=1}^n Q_{W_i}\ar[r] & Q_Y\to S_Y\ar@/_1pc/[ll]|\circ\;.}
\]
If the mapping cone $C_\phi$ is not $\idealBentmann$-projective, the situation has to be investigated individually. We will see examples of this in~\S\ref{sec:projdim2Bentmann} and~\S\ref{sec:projdim3Bentmann}. The resolutions we construct in these cases exhibit a certain six-term periodicity as well. However, they begin with a finite number of ``non-periodic steps'' (one in~\S\ref{sec:projdim2Bentmann} and two in~\S\ref{sec:projdim3Bentmann}), which can be considered as a symptom of the deficiency of the invariant filtrated $\KBentmann$-theory over non-accordion spaces from the homological viewpoint. We remark without proof that the mapping cone of the morphism $\phi\colon\RepBentmann_Y\to\bigoplus_{i=1}^n \RepBentmann_{W_i}$ is $\idealBentmann$-projective for every $Y\in\LCBentmann(X)^*$ if and only if~$X$ is a disjoint union of accordion spaces.

\section{Tensor Products with Free Right-Modules}
\label{sec:tensorproductsBentmann}
\begin{lemmaBentmann}
Let $M$ be an $\NTBentmannC$-left-module. There is an isomorphism $Q_Y\otimes_{\NTBentmannC} M\cong M(Y)$ of $\ZBentmann/2$-graded Abelian groups which is natural in $Y\ininBentmann\NTBentmannC$.
\end{lemmaBentmann}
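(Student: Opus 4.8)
The plan is to recognise this statement as an instance of the co-Yoneda lemma (Yoneda reduction) for modules over a pre-additive category, and to prove it by exhibiting the isomorphism explicitly together with its inverse.

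First I would spell out the tensor product. For an $\NTBentmannC$-right-module $N$ and an $\NTBentmannC$-left-module $M$, the $\ZBentmann/2$-graded Abelian group $N\otimes_{\NTBentmannC}M$ is the quotient of $\bigoplus_{Z\in\LCBentmann(X)^*}N(Z)\otimes_\ZBentmann M(Z)$ by the subgroup generated by the balancing relations $n\varphi\otimes m-n\otimes\varphi m$, where $\varphi\in\NTBentmann(Z,Z')$, $n\in N(Z')$ and $m\in M(Z)$; here $n\varphi\in N(Z)$ uses the contravariance of $N$ and $\varphi m\in M(Z')$ the covariance of $M$. Taking $N=Q_Y$, so that $Q_Y(Z)=\NTBentmann(Z,Y)$, I would define $\mu\colon Q_Y\otimes_{\NTBentmannC}M\to M(Y)$ on generators by $\mu(\varphi\otimes m)\defeqBentmann M(\varphi)(m)=\varphi m$ for $\varphi\in\NTBentmann(Z,Y)$ and $m\in M(Z)$. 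Functoriality of $M$ gives $M(\varphi\psi)=M(\varphi)M(\psi)$, which is exactly what is needed for $\mu$ to respect the balancing relations, so it descends to the tensor product; since every morphism in $\NTBentmann$ is homogeneous, $\mu$ is grading-preserving.

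For the inverse I would set $\nu\colon M(Y)\to Q_Y\otimes_{\NTBentmannC}M$, $\nu(m)\defeqBentmann\idBentmann_Y\otimes m$. Then $\mu\nu=\idBentmann$ follows at once from $M(\idBentmann_Y)=\idBentmann$. For $\nu\mu=\idBentmann$ the key calculation is that, applying the balancing relation with $\varphi\colon Z\to Y$, $n=\idBentmann_Y$ and $m\in M(Z)$, one has $\varphi\otimes m=(\idBentmann_Y\,\varphi)\otimes m=\idBentmann_Y\otimes\varphi m$ inside $Q_Y\otimes_{\NTBentmannC}M$; hence $\nu(\mu(\varphi\otimes m))=\idBentmann_Y\otimes\varphi m=\varphi\otimes m$. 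This collapsing of a pure tensor back onto the identity component is precisely the content of Yoneda reduction.

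Finally I would verify naturality in $Y$: a morphism $\alpha\colon Y\to Y'$ induces $Q_\alpha\colon Q_Y\to Q_{Y'}$ by post-composition $\varphi\mapsto\alpha\varphi$, and on a generator both composites of the naturality square send $\varphi\otimes m$ to $M(\alpha\varphi)(m)$, so the square commutes. I do not expect a genuine obstacle in this argument; the only points requiring real care are the bookkeeping for the $\ZBentmann/2$-grading and keeping the variance straight in the balancing relation, and these become routine once the tensor product of modules over the ringoid $\NTBentmannC$ is set up correctly.
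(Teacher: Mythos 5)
Your proof is correct: the well-definedness of $\mu$ against the balancing relations, the Yoneda-reduction identity $\varphi\otimes m=\idBentmann_Y\otimes\varphi m$, and the naturality square all check out, and the grading bookkeeping is indeed routine. It is, however, a genuinely different (more elementary) route than the paper's, which disposes of the lemma in one line by combining the tensor-hom adjunction with Yoneda's lemma: for any $\ZBentmann/2$-graded Abelian group $A$ there are isomorphisms
\begin{equation*}
\HomBentmann_\ZBentmann\bigl(Q_Y\otimes_{\NTBentmannC}M,A\bigr)
\;\cong\;
\HomBentmann_{\NTBentmannC}\bigl(Q_Y,\HomBentmann_\ZBentmann(M,A)\bigr)
\;\cong\;
\HomBentmann_\ZBentmann\bigl(M(Y),A\bigr)\;,
\end{equation*}
the first by the adjunction (with $\HomBentmann_\ZBentmann(M,A)$ viewed as an $\NTBentmannC$-right-module $Z\mapsto\HomBentmann_\ZBentmann(M(Z),A)$) and the second by Yoneda applied to the representable right-module $Q_Y$; since these are natural in $A$ and in $Y$, the functors $Q_Y\otimes_{\NTBentmannC}M$ and $M(Y)$ are naturally isomorphic. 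What the abstract argument buys is brevity and the fact that naturality comes for free from the formalism; what your explicit co-Yoneda computation buys is concrete formulas for the isomorphism and its inverse ($\varphi\otimes m\mapsto\varphi m$ and $m\mapsto\idBentmann_Y\otimes m$), which is what one actually manipulates when the paper later tensors its projective resolutions with $M$ to compute $\TorBentmann$-groups. Both arguments are complete; yours is the more self-contained of the two.
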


\begin{proof}
This is a simple consequence of Yoneda's lemma and the tensor-hom adjunction.
\end{proof}

\begin{lemmaBentmann}
  \label{lem:tensor_productsBentmann}
Let 
$\Sigma\RepBentmann_{(3)}\xrightarrow{\gamma}\RepBentmann_{(1)}\xrightarrow{\alpha}\RepBentmann_{(2)}
\xrightarrow{\beta_*}\RepBentmann_{(3)}$
be a distinguished or anti-distinguished triangle in $\KKBentmanncat(X)$, where $\RepBentmann_{(i)}=\bigoplus_{j=1}^{m_i} \RepBentmann_{Y^i_j}\oplus\bigoplus_{k=1}^{n_i} \Sigma\RepBentmann_{Z^i_k}$ for $1\leq i\leq 3$, $m_i, n_i\in\NBentmann$ and $Y^i_j, Z^i_k\in\LCBentmann(X)^*$. Set $Q_{(i)}=\widehat{\FKBentmann}(\RepBentmann_{(i)})$. If $M=\FKBentmann(A)$ for some $A\ininBentmann\KKBentmanncat(X)$, then the induced sequence
\begin{equation}
	\label{eq:exactseqofmoduleentriesBentmann}
\begin{split}
\xymatrix{
Q_{(1)}\otimes_{\NTBentmannC} M\ar[r]^{\alpha^*\otimes\idBentmann_M} & Q_{(2)}\otimes_{\NTBentmannC} M\ar[r]^{\beta^*\otimes\idBentmann_M} & Q_{(3)}\otimes_{\NTBentmannC} M\ar[d]^{\gamma^*\otimes\idBentmann_M} \\
Q_{(3)}\otimes_{\NTBentmannC} M[1]\ar[u]^{\gamma^*\otimes\idBentmann_M[1]} & Q_{(2)}\otimes_{\NTBentmannC} M[1]\ar[l]^{\beta^*\otimes\idBentmann_M[1]} & Q_{(1)}\otimes_{\NTBentmannC} M[1]\ar[l]^{\alpha^*\otimes\idBentmann_M[1]}
}
\end{split}
\end{equation}
is exact.
\end{lemmaBentmann}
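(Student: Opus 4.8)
The plan is to identify the six-term sequence~\eqref{eq:exactseqofmoduleentriesBentmann} with the long exact $\HomBentmann$-sequence that the given triangle induces under the cohomological functor $\KKBentmann(X;\blankBentmann,A)$, and then to read off exactness from the latter. The crucial input is that $M=\FKBentmann(A)$ is \emph{representable}: this is precisely what converts the purely algebraic tensor products into $\KKBentmann$-groups, and it is the point at which the hypothesis $M=\FKBentmann(A)$ is used (for a general left-module the sequence need not be exact).

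First I would reduce everything to the module entries. Since each $\RepBentmann_{(i)}$ is a finite direct sum of degree-shifted representing objects and $\widehat{\FKBentmann}$ is additive and stable, $Q_{(i)}=\widehat{\FKBentmann}(\RepBentmann_{(i)})$ is the corresponding direct sum of degree-shifted free right-modules $Q_Y$. Applying the preceding lemma summand by summand, together with the representability isomorphism $M(Y)=\FKBentmann_Y(A)\cong\KKBentmann(X;\RepBentmann_Y,A)$ and the suspension isomorphism $\KKBentmann(X;\Sigma\RepBentmann_Z,A)\cong\KKBentmann(X;\RepBentmann_Z,A)[1]$ for the shifted summands, I obtain natural isomorphisms
\[
Q_{(i)}\otimes_{\NTBentmannC} M\;\cong\;\KKBentmann(X;\RepBentmann_{(i)},A)
\]
of $\ZBentmann/2$-graded Abelian groups.

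The main work lies in the compatibility with the differentials: I must check that, under these isomorphisms, each arrow $\ast^*\otimes\idBentmann_M$ in~\eqref{eq:exactseqofmoduleentriesBentmann} corresponds to the map induced on $\KKBentmann(X;\blankBentmann,A)$ by the respective morphism $\gamma$, $\alpha$, $\beta_*$ of the triangle. This is a Yoneda bookkeeping: a morphism of representing objects induces, via $\widehat{\FKBentmann}$, a map of free right-modules given by a matrix of morphisms in $\NTBentmann$, and tensoring such a matrix entry with $M$ realises exactly the action of that transformation on the entries of $M$, which is the naturality already furnished by the preceding lemma. Tracing this through---and keeping track of the degree shifts coming from the suspended summands---identifies~\eqref{eq:exactseqofmoduleentriesBentmann} with the image of the triangle under $\KKBentmann(X;\blankBentmann,A)$. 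I expect this matching of the differentials, rather than the matching of the objects, to be the delicate step, chiefly because one must get the variances right and track signs correctly.

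Finally, since distinguished---and, by the remark recalled in the preliminaries, also anti-distinguished---triangles induce long exact $\HomBentmann$-sequences, the functor $\KKBentmann(X;\blankBentmann,A)$ turns the given triangle into an exact six-term sequence; the sign reversal on one morphism in the anti-distinguished case leaves all kernels and images unchanged and hence does not affect exactness. Transporting this exactness across the isomorphisms established above yields the exactness of~\eqref{eq:exactseqofmoduleentriesBentmann}, which completes the proof.
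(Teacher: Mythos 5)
Your proposal is correct and follows essentially the same route as the paper's proof: identify $Q_{(i)}\otimes_{\NTBentmannC} M$ with $\KKBentmann(X;\RepBentmann_{(i)},A)$ using the preceding lemma together with the representability theorem, and then invoke the long exact $\HomBentmann$-sequences induced by distinguished and anti-distinguished triangles. The paper compresses the naturality/differential bookkeeping into the single word ``naturally,'' so your more explicit treatment of the compatibility of the maps is simply an expanded version of the same argument.
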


\begin{proof}
Using the previous lemma and the representability theorem, we naturally identify $Q_{(i)}\otimes_{\NTBentmannC} M\cong\KKBentmann(X;\RepBentmann_{(i)},A)$. Since, in triangulated categories, distinguished or anti-distinguished triangles induce long exact $\HomBentmann$-sequences, the sequence \eqref{eq:exactseqofmoduleentriesBentmann} is thus exact.
\end{proof}

\section{Proof of Proposition \ref{pro:projdim2Bentmann}}
	\label{sec:projdim2Bentmann}
We may restrict to connected $T_0$-spaces. In \cite{MN:BootstrapBentmann}, a list of isomorphism classes of connected $T_0$-spaces with three or four points is given. If $X$ is a disjoint union of accordion spaces, then the assertion follows from \cite{BKBentmann}. The remaining spaces fall into two classes:
\begin{enumerate}
 \item all connected non-accordion four-point $T_0$-spaces except for the pseudocircle;
 \item the pseudocircle (see~\S\ref{sec:description_of_pseudocircleBentmann}).
\end{enumerate}
The spaces in the first class have the following in common: If we fix two of them, say $X$, $Y$,
then there is an ungraded isomorphism $\Phi\colon\NTBentmannC(X)\to\NTBentmannC(Y)$ between the categories of natural transformations on the respective filtrated $\KBentmann$-theories such that the induced equivalence of ungraded module categories
\[
\Phi^*\colon\mathfrak{Mod}^\mathrm{ungr}\bigbBentmann{\NTBentmannC(Y)}_\textup{c}
\to\mathfrak{Mod}^\mathrm{ungr}\bigbBentmann{\NTBentmannC(X)}_\textup{c}
\]
restricts to a bijective correspondence between exact ungraded
$\NTBentmannC(Y)$-modules and exact ungraded $\NTBentmannC(X)$-modules.
Moreover, the isomorphism~$\Phi$ restricts to isomorphisms
from $\NTBentmannss(X)$ onto $\NTBentmannss(Y)$ and from
$\NTBentmannnil(X)$ onto $\NTBentmannnil(Y)$.
In particular, the assertion holds for $X$ if and only if it holds for $Y$.

The above is a consequence of the investigations in \cites{MN:FiltratedBentmann, BentmannBentmann, BKBentmann}; the same kind of relation was found in \cite{BKBentmann} for the categories of natural transformations associated to accordion spaces with the same number of points. As a consequence, it suffices to verify the assertion for one representative of the first class---we choose $Z_3$---and for the pseudocircle.

\subsection{Resolutions for the space \texorpdfstring{$Z_3$}{Z3}}
	\label{sec:resolutionsforZ3Bentmann}
We refer to \cite{MN:FiltratedBentmann} for a description of the category $\NTBentmannC(Z_3)$, which in particular implies, that the space $Z_3$ satisfies the conditions of Proposition~\ref{pro:projdim1Bentmann}. 
Using the extension triangles from \cite{MN:FiltratedBentmann}*{(2.5)}, the procedure described in~\S\ref{sec:freeresBentmann} yields the following projective resolutions induced by distinguished triangles as in Lemma~\ref{lem:tensor_productsBentmann}:
\begin{eqnarray*}
 &\xymatrix{Q_1[1]\ar[r] & Q_4\ar[r] & Q_{14}\to S_{14}\ar@/_1pc/[ll]|\circ\;,}\quad\textup{and similarly for $S_{24}$, $S_{34}$;}
\\
 &\xymatrix{Q_{1234}[1]\ar[r] & Q_{1}[1]\oplus Q_{2}[1]\oplus Q_{3}[1]\ar[r] & Q_{4}\to S_{4}\ar@/_1pc/[ll]|\circ\;;}
\\
 &\xymatrix{Q_{234}\ar[r] & Q_{1234}\ar[r] & Q_{1}\to S_{1}\ar@/_1pc/[ll]|\circ\;,}\quad\textup{and similarly for $S_{2}$, $S_{3}$.}
\end{eqnarray*}

Next we will deal with the modules $S_{jk4}$, where $1\leq j<k\leq 3$. We observe that there is a Mayer-Vietoris type exact sequence of the form
\begin{equation}
	\label{eq:jk4Bentmann}
\xymatrix{Q_{4}\ar[r] & Q_{j4}\oplus Q_{k4}\ar[r] & Q_{jk4}\ar@/_1pc/[ll]|\circ}.
\end{equation}

\begin{lemmaBentmann}
	\label{lem:induced_by_exact_triangleBentmann}
The candidate triangle $\Sigma\RepBentmann_{4}\to\RepBentmann_{jk4}\to\RepBentmann_{j4}\oplus\RepBentmann_{k4}\to\RepBentmann_4$
corresponding to the periodic part of the sequence \eqref{eq:jk4Bentmann} is distinguished or anti-distinguished \textup{(}depending on the choice of signs for the maps in \eqref{eq:jk4Bentmann}\textup{)}. 
\end{lemmaBentmann}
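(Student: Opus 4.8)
The plan is to realise the candidate triangle as a Mayer--Vietoris triangle attached to a homotopy cartesian square of representing objects, assembled from the extension triangles of \cite{MN:FiltratedBentmann}*{(2.5)}. Recall that in~$Z_3$ the point~$4$ is generic and $1,2,3$ are closed, so $4=\{4\}$, $j4=\{j,4\}$, $k4=\{k,4\}$, $jk4=\{j,k,4\}$ are connected and open, while $\{j\},\{k\}$ are their relevant closed complements. The open inclusions $4\hookrightarrow j4\hookrightarrow jk4$ and $4\hookrightarrow k4\hookrightarrow jk4$ induce, by representability, a square in $\KKBentmanncat(Z_3)$
\[
\xymatrix{
\RepBentmann_{jk4}\ar[r]^-{f}\ar[d]_-{h} & \RepBentmann_{j4}\ar[d]^-{q}\\
\RepBentmann_{k4}\ar[r]^-{g} & \RepBentmann_{4}\;,
}
\]
whose two composites are both dual to the inclusion $\FKBentmann_4\to\FKBentmann_{jk4}$, so the square commutes. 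Its four edges are the nontrivial maps of four extension triangles: $f$ and $g$ have fibre $\RepBentmann_k$ (as $\{k\}$ is the closed complement of $j4$ in $jk4$, respectively of $4$ in $k4$), while $h$ and $q$ have fibre $\RepBentmann_j$.

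First I would verify that this square is homotopy cartesian, i.e.\ that the morphism between the horizontal fibres $\RepBentmann_k\to\RepBentmann_k$ produced by the fill-in axiom \textup{(TR3)} is an isomorphism. The inclusion $A(k4)\hookrightarrow A(jk4)$ identifies $A(k4)/A(4)$ with $A(jk4)/A(j4)$, both being the subquotient $A(\{k\})$; naturally in~$A$, this says the comparison of fibres is induced by the identity of the functor $\FKBentmann_k$. Since $\widehat{\FKBentmann}$ restricts to a (contravariant) equivalence on $\idealBentmann\nbBentmann$-projective objects, it suffices to check that the map induces the identity on $\widehat{\FKBentmann}(\RepBentmann_k)\cong Q_k$, which I would read off from the explicit description of $\NTBentmannC(Z_3)$ in \cite{MN:FiltratedBentmann}.

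Once the square is known to be homotopy cartesian, the standard Mayer--Vietoris construction in a triangulated category \cite{NeemanBentmann} supplies a distinguished triangle
\[
\RepBentmann_{jk4}
\xrightarrow{\left(\begin{smallmatrix} f \\ h \end{smallmatrix}\right)}
\RepBentmann_{j4}\oplus\RepBentmann_{k4}
\xrightarrow{(q,\,-g)}
\RepBentmann_4
\to \Sigma\RepBentmann_{jk4}\;.
\]
Using Bott periodicity $\Sigma^2\cong\idBentmann$ to identify $\Sigma\RepBentmann_4$ with $\Sigma^{-1}\RepBentmann_4$ and rotating, this is precisely the candidate triangle in the statement. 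Its two maps agree with those of~\eqref{eq:jk4Bentmann} because, $\widehat{\FKBentmann}$ being fully faithful on $\idealBentmann\nbBentmann$-projectives, a morphism of such objects is determined by the associated map of right-modules; reversing a sign in~\eqref{eq:jk4Bentmann} replaces this distinguished triangle by an anti-distinguished one, which accounts for the parenthetical clause.

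The main obstacle is the middle step: the fill-in axiom only guarantees the \emph{existence} of the comparison map on fibres, so genuine input about $\NTBentmannC(Z_3)$ is needed to pin it down as an isomorphism and thereby certify that the square is homotopy cartesian rather than merely commutative. Everything else is formal triangulated-category manipulation together with the matching of morphisms via $\widehat{\FKBentmann}$.
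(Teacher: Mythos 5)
Your route is genuinely different from the paper's --- the paper never forms a square of representing objects; it takes the honest mapping cone $\ConeBentmann_\varphi$ of $\varphi\colon\RepBentmann_{jk4}\to\RepBentmann_{j4}\oplus\RepBentmann_{k4}$ (whose triangle is distinguished by the axioms), observes that both $\widehat{\FKBentmann}(\ConeBentmann_\varphi)$ and $Q_4$ are extensions of $\kerBentmann(\varphi^*)$ by $\cokerBentmann(\varphi^*)[1]$, computes $\ExtBentmann^1_{\NTBentmannC(Z_3)^\opBentmann}\bigl(\kerBentmann(\varphi^*),\cokerBentmann(\varphi^*)[1]\bigr)\cong\ZBentmann$, and uses exactness of the underlying modules to pin both classes down to generators, hence equal up to sign. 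Your approach instead has a genuine gap at exactly the step you single out as the crux. You write that the square being homotopy cartesian means, ``i.e.'', that the \textup{(TR3)} fill-in on the horizontal fibres is an isomorphism. That is not the definition, and the implication you need is not formal. Homotopy cartesian means, by definition, that the candidate triangle $\RepBentmann_{jk4}\to\RepBentmann_{j4}\oplus\RepBentmann_{k4}\to\RepBentmann_4\to\Sigma\RepBentmann_{jk4}$ is distinguished --- which is precisely the assertion of the Lemma. Because cones are not functorial in a triangulated category, a commutative square admitting an invertible fill-in on parallel cones is not thereby homotopy cartesian: \textup{(TR3)} gives existence of fill-ins, but only Neeman's \emph{good} morphisms of triangles carry the extra property that converts an invertible fill-in into a distinguished Mayer--Vietoris triangle. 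As written, your proof certifies homotopy cartesianness by a criterion that is not equivalent to it, so the central step is circular.

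The gap can be bridged, but the bridge is the real content that is missing: every commutative square extends to a \emph{good} morphism of triangles (Neeman); your module computation in fact forces \emph{every} fill-in to be the identity (under $\widehat{\FKBentmann}$ and Yoneda a fill-in corresponds to $n\in\NTBentmann_0(k,k)\cong\ZBentmann$ with $n\cdot r^k_{k4}=r^k_{jk4}\circ i^{jk4}_{k4}=r^k_{k4}$, whence $n=1$), so in particular the good fill-ins are isomorphisms; and a good morphism of triangles with invertible third component yields the distinguished Mayer--Vietoris triangle, by splitting a contractible candidate triangle off its mapping cone and invoking the fact that a candidate triangle whose direct sum with a contractible one is distinguished is itself distinguished. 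None of this is a routine citation, and it is exactly the triangulated-category subtlety that the paper's module-theoretic $\ExtBentmann$-computation is designed to sidestep. The remainder of your outline --- the commuting square of representing objects, the identification of both fibres with $\RepBentmann_k$, the matching of the maps under the fully faithful restriction of $\widehat{\FKBentmann}$ to $\idealBentmann$-projectives, and the sign discussion producing the anti-distinguished variant --- is sound.
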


\begin{proof}
We give the proof for $j=1$ and $k=2$. The other cases follow from cyclicly permuting the indices $1$, $2$ and~$3$. We denote the morphism $\RepBentmann_{124}\to\RepBentmann_{14}\oplus\RepBentmann_{24}$ by~$\varphi$ and the corresponding map $Q_{14}\oplus Q_{24}\to Q_{124}$ in \eqref{eq:jk4Bentmann} by~$\varphi^*$. It suffices to check that $\widehat{\FKBentmann}(\ConeBentmann_\varphi)$ and $Q_4$ correspond, possibly up to a sign, to the same element in $\ExtBentmann^1_{\NTBentmannC(Z_3)^\opBentmann}\bigl(\kerBentmann(\varphi^*),\cokerBentmann(\varphi^*)[1]\bigr)$. We have $\cokerBentmann(\varphi^*)\cong S_{124}$ and an extension $S_{124}[1]\rightarrowtail Q_4\twoheadrightarrow\kerBentmann(\varphi^*)$. Since $\HomBentmann(Q_4,S_{124}[1])\cong S_{124}(4)[1]=0$ and $\ExtBentmann^1(Q_4,S_{124}[1])$ because~$Q_4$ is projective, the long exact $\ExtBentmann$-sequence yields $\ExtBentmann^1\bigl(\kerBentmann(\varphi^*),\cokerBentmann(\varphi^*)[1]\bigr)\cong\HomBentmann(S_{124}[1],S_{124}[1])\cong\ZBentmann$. Considering the sequence of transformations $3\xrightarrow{\delta} 124\xrightarrow{i} 1234\xrightarrow{r} 3$, it is straight-forward to check that such an extension corresponds to one of the generators $\pm 1\in\ZBentmann$ if and only if its underlying module is exact. This concludes the proof because both $\widehat{\FKBentmann}(\ConeBentmann_\varphi)$ and~$Q_4$ are exact.
\end{proof}

Hence we obtain the following projective resolutions induced by distinguished or anti-distinguished triangles as in Lemma~\ref{lem:tensor_productsBentmann}:
\begin{equation*}
\xymatrix{Q_{4}\ar[r] & Q_{j4}\oplus Q_{k4}\ar[r] & Q_{jk4}\to S_{jk4}\ar@/_1pc/[ll]|\circ}.
\end{equation*}
To summarize, by Lemma~\ref{lem:tensor_productsBentmann}, $\TorBentmann_n^{\NTBentmannC}(S_Y,M)=0$ for $Y\neq 1234$ and $n\geq 1$.

As we know from~\cite{MN:FiltratedBentmann}, the subset 1234 of $Z_3$ plays an exceptional role. In the notation of~\cite{MN:FiltratedBentmann} (with the direction of the arrows reversed because we are dealing with \emph{right}-modules), the kernel of the homomorphism $Q_{124}\oplus Q_{134}\oplus Q_{234}\xrightarrow{\left(\begin{smallmatrix}i&i&i\end{smallmatrix}\right)} Q_{1234}$ is of the form
\begin{equation*}
\begin{split}
\xymatrix{
&\ZBentmann\ar[ld]&0\ar[l]\ar[ld]&&\ZBentmann[1]\ar[ld]&\\
\ZBentmann^2&\ZBentmann\ar[l]&0\ar[ld]\ar[lu]&0\ar[l]\ar[ld]\ar[lu]&\ZBentmann[1]\ar[l]&\ZBentmann^2\;.\ar[l]|\circ\ar[ld]|\circ\ar[lu]|\circ\\
&\ZBentmann\ar[lu]&0\ar[l]\ar[lu]&&\ZBentmann[1]\ar[lu]&
}
\end{split}
\end{equation*}
It is the image of the module homomorphism
\begin{equation}
\label{eq:142434_to_124134234Bentmann}
Q_{14}\oplus Q_{24}\oplus Q_{34}\xrightarrow{\left(\begin{smallmatrix}i&-i&0\\-i&0&i\\0&i&-i\end{smallmatrix}\right)} Q_{124}\oplus Q_{134}\oplus Q_{234},
\end{equation}
the kernel of which, in turn, is of the form
\begin{equation*}
\begin{split}
\xymatrix{
&0\ar[ld]&\ZBentmann[1]\ar[l]\ar[ld]&&\ZBentmann[1]\ar[ld]&\\
\ZBentmann&0\ar[l]&\ZBentmann[1]\ar[ld]\ar[lu]&\ZBentmann[1]^3\ar[l]\ar[ld]\ar[lu]&\ZBentmann[1]\ar[l]&\ZBentmann\;.\ar[l]|\circ\ar[ld]|\circ\ar[lu]|\circ\\
&0\ar[lu]&\ZBentmann[1]\ar[l]\ar[lu]&&\ZBentmann[1]\ar[lu]&
}
\end{split}
\end{equation*}
A surjection from $Q_4\oplus Q_{1234}[1]$ onto this module is given by $\left(\begin{smallmatrix}i&i&i\\ \delta_{1234}^{14}&0&0\end{smallmatrix}\right)$, where $\delta_{1234}^{14}\defeqBentmann \delta_{3}^{14}\circ r_{1234}^{3}$. The kernel of this homomorphism has the form
\begin{equation*}
\begin{split}
\xymatrix{
&\ZBentmann[1]\ar[ld]&\ZBentmann[1]\ar[l]\ar[ld]&&0\ar[ld]&\\
\ZBentmann[1]&\ZBentmann[1]\ar[l]&\ZBentmann[1]\ar[ld]\ar[lu]&0\ar[l]\ar[ld]\ar[lu]&0\ar[l]&0\;.\ar[l]|\circ\ar[ld]|\circ\ar[lu]|\circ\\
&\ZBentmann[1]\ar[lu]&\ZBentmann[1]\ar[l]\ar[lu]&&0\ar[lu]&
}
\end{split}
\end{equation*}
This module is isomorphic to $\SyzBentmann_{1234}[1]$, where $\SyzBentmann_{1234}\defeqBentmann\ker(Q_{1234}\twoheadrightarrow S_{1234})$. Therefore, we end up with the projective resolution
\begin{equation}
	\label{eq:resolution_for_S1234Bentmann}
\resizebox{\textwidth}{!}{
\xymatrix @!R=0.1pc
{Q_{4}\oplus Q_{1234}[1]\ar[r] & Q_{14}\oplus Q_{24}\oplus Q_{34}\ar[r] & Q_{124}\oplus Q_{134}\oplus Q_{234}\ar[r]\ar@/_1pc/[ll]|\circ & Q_{1234}\to S_{1234}\;.
}
}
\end{equation}
The homomorphism from $Q_{124}\oplus Q_{134}\oplus Q_{234}$ to $Q_{4}\oplus Q_{1234}[1]$ is given by $\left(\begin{smallmatrix}0&0&-\delta_{234}^4\\ i&i&i \end{smallmatrix}\right)$, where $\delta_{234}^4\defeqBentmann \delta_2^4\circ r_{234}^2$.

\begin{lemmaBentmann}
	\label{lem:induced_by_exact_triangleBentmann2}
The candidate triangle in $\KKBentmanncat(X)$
corresponding to the periodic part of the sequence \eqref{eq:resolution_for_S1234Bentmann} is distinguished or anti-distinguished \textup{(}depending on the choice of signs for the maps in \eqref{eq:resolution_for_S1234Bentmann}\textup{)}. 
\end{lemmaBentmann}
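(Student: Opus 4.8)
The plan is to follow the proof of Lemma~\ref{lem:induced_by_exact_triangleBentmann} verbatim in structure. Write $\RepBentmann_{(1)}=\RepBentmann_4\oplus\Sigma\RepBentmann_{1234}$, $\RepBentmann_{(2)}=\RepBentmann_{14}\oplus\RepBentmann_{24}\oplus\RepBentmann_{34}$ and $\RepBentmann_{(3)}=\RepBentmann_{124}\oplus\RepBentmann_{134}\oplus\RepBentmann_{234}$, and set $Q_{(i)}=\widehat{\FKBentmann}(\RepBentmann_{(i)})$, so that the candidate triangle attached to the periodic part of~\eqref{eq:resolution_for_S1234Bentmann} has the form $\RepBentmann_{(3)}\xrightarrow{\beta}\RepBentmann_{(2)}\xrightarrow{\alpha}\RepBentmann_{(1)}\to\Sigma\RepBentmann_{(3)}$, where $\beta$ is the $\idealBentmann$-projective lift under the contravariant equivalence $\widehat{\FKBentmann}$ of the module map $\psi$ in~\eqref{eq:142434_to_124134234Bentmann}, i.e.\ $\widehat{\FKBentmann}(\beta)=\psi\colon Q_{(2)}\to Q_{(3)}$. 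I would fix $\beta$ and form its mapping cone, giving the distinguished triangle $\RepBentmann_{(3)}\xrightarrow{\beta}\RepBentmann_{(2)}\to\ConeBentmann_\beta\to\Sigma\RepBentmann_{(3)}$. (A priori $\ConeBentmann_\beta$ need not be $\idealBentmann$-projective, consistently with the remark in~\S\ref{sec:freeresBentmann}; this is exactly why a direct appeal to the equivalence is unavailable and the comparison must be made through an $\ExtBentmann$-group.) As in Lemma~\ref{lem:induced_by_exact_triangleBentmann}, it then suffices to verify that $\widehat{\FKBentmann}(\ConeBentmann_\beta)$ and $Q_{(1)}=Q_4\oplus Q_{1234}[1]$ determine, up to a sign, the same element of $\ExtBentmann^1_{\NTBentmannC(Z_3)^\opBentmann}\bigl(\kerBentmann(\psi),\cokerBentmann(\psi)[1]\bigr)$.

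The next step is to record the two extensions explicitly. Since $\imBentmann(\psi)$ is the kernel of $\left(\begin{smallmatrix}i&i&i\end{smallmatrix}\right)\colon Q_{(3)}\to Q_{1234}$ and the sequence~\eqref{eq:resolution_for_S1234Bentmann} is exact at $Q_{1234}$, we have $\cokerBentmann(\psi)\cong\SyzBentmann_{1234}$; applying $\widehat{\FKBentmann}$ to the cone triangle and splitting the resulting six-term sequence yields the extension $\SyzBentmann_{1234}[1]\rightarrowtail\widehat{\FKBentmann}(\ConeBentmann_\beta)\twoheadrightarrow\kerBentmann(\psi)$. On the other hand, the homomorphism $a=\left(\begin{smallmatrix}i&i&i\\ \delta_{1234}^{14}&0&0\end{smallmatrix}\right)$ presents $Q_{(1)}$ as an extension $\SyzBentmann_{1234}[1]\rightarrowtail Q_{(1)}\twoheadrightarrow\kerBentmann(\psi)$, because $\kerBentmann(a)\cong\SyzBentmann_{1234}[1]$. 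Both thus define classes in $\ExtBentmann^1\bigl(\kerBentmann(\psi),\SyzBentmann_{1234}[1]\bigr)$, and the candidate triangle is distinguished or anti-distinguished exactly when these classes coincide up to sign.

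To identify this group I would apply $\HomBentmann\bigl(\blankBentmann,\SyzBentmann_{1234}[1]\bigr)$ to the presentation $\SyzBentmann_{1234}[1]\rightarrowtail Q_{(1)}\twoheadrightarrow\kerBentmann(\psi)$. As $Q_{(1)}$ is projective, $\ExtBentmann^1\bigl(Q_{(1)},\SyzBentmann_{1234}[1]\bigr)=0$, so the long exact sequence identifies $\ExtBentmann^1\bigl(\kerBentmann(\psi),\SyzBentmann_{1234}[1]\bigr)$ with the cokernel of the restriction map $\HomBentmann\bigl(Q_{(1)},\SyzBentmann_{1234}[1]\bigr)\to\HomBentmann\bigl(\SyzBentmann_{1234}[1],\SyzBentmann_{1234}[1]\bigr)$. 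By Yoneda the source is the sum of the entries of $\SyzBentmann_{1234}$ at the objects $4$ and $1234$ (up to degree shift), which can be read off the diagram for $\SyzBentmann_{1234}$ displayed before~\eqref{eq:resolution_for_S1234Bentmann}. By analogy with the simple-module situation of Lemma~\ref{lem:induced_by_exact_triangleBentmann} I expect this quotient to turn out infinite cyclic.

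Finally I would characterise the exact classes: arguing through the six-term sequences passing through the object~$1234$ — the analogue of the chain $3\xrightarrow{\delta}124\xrightarrow{i}1234\xrightarrow{r}3$ used in Lemma~\ref{lem:induced_by_exact_triangleBentmann} — a class should be a generator $\pm1$ if and only if its underlying module is exact. The proof then concludes as before: $\widehat{\FKBentmann}(\ConeBentmann_\beta)$ is exact since it lies in the range of $\widehat{\FKBentmann}$, and $Q_{(1)}$ is exact since it is free; hence both classes are generators and therefore agree up to sign, which is precisely the asserted sign ambiguity between ``distinguished'' and ``anti-distinguished''. The main obstacle is exactly this last step. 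In Lemma~\ref{lem:induced_by_exact_triangleBentmann} the cokernel was the \emph{simple} module $S_{124}$, whose endomorphism ring is $\ZBentmann$ and for which the exactness criterion was immediate; here the cokernel $\SyzBentmann_{1234}$ is not simple, so computing $\HomBentmann\bigl(\SyzBentmann_{1234}[1],\SyzBentmann_{1234}[1]\bigr)$ together with the Yoneda map above, and verifying that the exactness condition cuts out a single pair $\pm g$, requires a careful bookkeeping of the entries and of the transformations $i$, $r$, $\delta$ on the explicit diagram of $\SyzBentmann_{1234}$.
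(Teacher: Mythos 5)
Your proposal is correct in outline and shares the paper's overall strategy: both transplant the proof of Lemma~\ref{lem:induced_by_exact_triangleBentmann}, namely compare the classes of $\widehat{\FKBentmann}$ of the mapping cone and of $Q_4\oplus Q_{1234}[1]$ inside $\ExtBentmann^1_{\NTBentmannC(Z_3)^\opBentmann}\bigl(\kerBentmann(\varphi^*),\cokerBentmann(\varphi^*)[1]\bigr)$, show this group is infinite cyclic, and use that exactness of the underlying module singles out $\pm$ a generator (a step the paper likewise only asserts by analogy with Lemma~\ref{lem:induced_by_exact_triangleBentmann}). The genuine difference is how the $\ExtBentmann$-group is identified. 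The paper feeds the extension $Q_4\rightarrowtail\kerBentmann(\varphi^*)\twoheadrightarrow S_{1234}[1]$, describing the internal structure of the kernel, together with the defining extension $\SyzBentmann_{1234}[1]\rightarrowtail Q_{1234}[1]\twoheadrightarrow S_{1234}[1]$ into two long exact sequences, landing on $\HomBentmann(S_{1234}[1],S_{1234}[1])\cong\ZBentmann$, which is immediate because $S_{1234}$ is simple. You instead apply $\HomBentmann(\blankBentmann,\SyzBentmann_{1234}[1])$ to the presentation $\SyzBentmann_{1234}[1]\rightarrowtail Q_4\oplus Q_{1234}[1]\twoheadrightarrow\kerBentmann(\varphi^*)$ that the text has already constructed; the vanishing you need of $\HomBentmann\bigl(Q_4\oplus Q_{1234}[1],\SyzBentmann_{1234}[1]\bigr)$ does hold, since the entry of $\SyzBentmann_{1234}[1]$ at $4$ is $\ZBentmann[1]$, purely odd, and its entry at $1234$ is zero, so your one long exact sequence identifies the $\ExtBentmann$-group with $\HomBentmann(\SyzBentmann_{1234}[1],\SyzBentmann_{1234}[1])$---which, however, you only conjecture to be $\ZBentmann$. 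It is: on the support $\{4,\,j4,\,jk4\}$ of $\SyzBentmann_{1234}$ every entry is a copy of $\ZBentmann$ concentrated in even degree and every structure map induced by an $i$-transformation is an isomorphism (precomposition sends $i_{jk4}^{1234}\mapsto i_{j4}^{1234}\mapsto i_{4}^{1234}$), so any endomorphism is multiplication by one and the same integer on all entries, and odd endomorphisms vanish for degree reasons; hence $\HomBentmann(\SyzBentmann_{1234},\SyzBentmann_{1234})\cong\ZBentmann$ and your route closes. The trade-off between the two arguments: the paper's choice of extension is engineered so that the terminal endomorphism computation is trivial (endomorphisms of a simple module), whereas yours uses only data already displayed on the page, at the price of this one extra, easy but genuinely necessary, computation of the endomorphism ring of the non-simple syzygy module.
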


\begin{proof}
The argument is analogous to the one in the proof of Lemma \ref{lem:induced_by_exact_triangleBentmann}. Again, we consider the group $\ExtBentmann^1_{\NTBentmannC(Z_3)^\opBentmann}\bigl(\kerBentmann(\varphi^*),\cokerBentmann(\varphi^*)[1]\bigr)$ where $\varphi^*$ now denotes the map \eqref{eq:142434_to_124134234Bentmann}. We have $\cokerBentmann(\varphi^*)\cong\SyzBentmann_{1234}$ and an extension $Q_{4}\rightarrowtail\kerBentmann(\varphi^*)\twoheadrightarrow S_{1234}[1]$. Using long exact sequences, we obtain
\begin{multline*}
\ExtBentmann^1\bigl(\kerBentmann(\varphi^*),\cokerBentmann(\varphi^*)[1]\bigr)
\cong\ExtBentmann^1(S_{1234}[1],\SyzBentmann_{1234}[1])\\
\cong\HomBentmann(S_{1234}[1],S_{1234}[1])\cong\ZBentmann.
\end{multline*}
Again, an extension corresponds to a generator if and only if its underlying module is exact.
\end{proof}

By the previous lemma and~\S\ref{sec:tensorproductsBentmann}, computing the tensor product of this complex with~$M$ and taking homology shows that $\TorBentmann_n^{\NTBentmannC}(\NTBentmannss,M)=0$ for $n\geq 2$ and that $\TorBentmann_1^{\NTBentmannC}(\NTBentmannss,M)$ is equal to $\TorBentmann_1^{\NTBentmannC}(S_{1234},M)$ and isomorphic to the homology of the complex \eqref{eq:homology_computing_Tor1Bentmann}.
\begin{exampleBentmann}
 For the filtrated $\KBentmann$-module with projective dimension~$2$ constructed in \cite{MN:FiltratedBentmann}*{\S5} we get $\TorBentmann_1^{\NTBentmannC}(\NTBentmannss,M)\cong\ZBentmann/k$.
\end{exampleBentmann}

\begin{remarkBentmann}
	\label{rem:Tor_for_SBentmann}
As explicated in the beginning of this section, the category $\NTBentmannC(S)$ corresponding to the four-point space~$S$ defined in the introduction is isomorphic in an appropriate sense to the category $\NTBentmannC(Z_3)$. As has been established in~\cite{BentmannBentmann}, the indecomposable morphisms in $\NTBentmannC(S)$ are organised in the diagram
\begin{equation*}
\xymatrix{
										 & 12\ar[r]|\circ^\delta\ar[rd]^<<<<r    & 34\ar[rd]^i                    &                               & 2\ar[rd]^i   & \\
123\ar[ru]^r\ar[r]|\circ^\delta\ar[rd]^r & 4\ar[ru]|\hole^<<<i\ar[rd]|\hole^<<<i & 1\ar[r]|<<<<<\circ^<<<<<\delta & 234\ar[ru]^r\ar[r]^i\ar[rd]^r & 1234\ar[r]^r & 123\;. \\
										 & 13\ar[ru]^<<<<r\ar[r]|\circ^\delta    & 24\ar[ru]^i                    &                               & 3\ar[ru]^i   & }
\end{equation*}
In analogy to \eqref{eq:homology_computing_Tor1Bentmann}, we have that $\TorBentmann_1^{\NTBentmannC(S)}(\NTBentmannss,M)$ is isomorphic to the homology of the complex
\begin{multline}
  \label{eq:tor_sequence_for_SBentmann}
M(12)[1]\oplus M(4)\oplus M(13)[1]\xrightarrow{\left(\begin{smallmatrix}\delta&-r&0\\-i&0&i\\0&r&-\delta\end{smallmatrix}\right)} M(34)\oplus M(1)[1]\oplus M(24)\\
\xrightarrow{\left(\begin{smallmatrix}i&\delta&i\end{smallmatrix}\right)} M(234)\;,
\end{multline}
where $M=\FKBentmann(A)$ for some separable \CstarBentmann algebra~$A$ over~$X$.
\end{remarkBentmann}

\subsection{Resolutions for the pseudocircle}
  \label{sec:description_of_pseudocircleBentmann}
Let $C_2=\{1,2,3,4\}$ with the partial order defined by $1<3$, $1<4$, $2<3$, $2<4$. The topology on~$C_2$ is thus
given by $\{\emptyset,3,4,34,134,234,1234\}$.
Hence the non-empty, connected, locally closed subsets are
\[
\LCBentmann(C_2)^* = \{3,4,134,234,1234,13,14,23,24,124,123,1,2\}\;.
\]
The partial order on $ C_2$ corresponds to the directed graph
\[
\xy
(-9.5,9.5)*+{4}; (9.5,9.5)*+{2}; (-9.5,-9.5)*+{3}; (9.5,-9.5)*+{1\;.};
(-7,7)*+{\bullet}="p4"; (7,7)*+{\bullet}="p2";
(-7,-7)*+{\bullet}="p3"; (7,-7)*+{\bullet}="p1";
{\ar "p4"; "p2"}; {\ar|\hole "p4"; "p1"}; {\ar "p3"; "p2"}; {\ar "p3"; "p1"};
\endxy
\]
The space $C_2$ is the only $T_0$-space with at most four points with the property that its order complex (see \cite{MN:FiltratedBentmann}*{Definition 2.6}) is not contractible; in fact, it is homeomorphic to the circle $\SphereBentmann^1$. Therefore, by the representability theorem \cite{MN:FiltratedBentmann}*{\S 2.1} we find
\[
\NTBentmann_*(C_2,C_2)\cong\KKBentmann_*(X;\RepBentmann_{C_2},\RepBentmann_{C_2})\cong\KBentmann_*\bigl(\RepBentmann_{C_2}(C_2)\bigr)
\cong\KBentmann^*\left(\SphereBentmann^1\right)\cong\ZBentmann\oplus\ZBentmann[1]\;,
\]
that is, there are non-trivial odd natural transformations
$\FKBentmann_{C_2}\Rightarrow\FKBentmann_{C_2}$. These are generated, for instance,
by the composition $C_2\xrightarrow{r} 1\xrightarrow{\delta} 3\xrightarrow{i}C_2$.
This follows from the description of the category $\NTBentmannC(C_2)$ below.
Note that $\delta_{C_2}^{C_2}\circ \delta_{C_2}^{C_2}$
vanishes because it factors through $r_{13}^1\circ i_3^{13}=0$.

Figure~\ref{fig:indecomposablesC2Bentmann} displays a set of indecomposable transformations generating the category $\NTBentmannC(C_2)$ determined in~\cite{BentmannBentmann}*{\S 6.3.2}, where also a list of relations generating the relations in the category $\NTBentmannC(C_2)$ can be found. From this, it is straight-forward to verify that the space $C_2$ satisfies the conditions of Proposition~\ref{pro:projdim1Bentmann}.

\begin{figure}[htbp]
\begin{equation*}
\begin{split}
\xymatrix{
                               &                                       & 13\ar@/^/[rd]^i                                                  &                                  &                                                            &   & \\
3\ar[r]^i\ar@/^/[rdd]|(0.54)\hole^<<<<<i & 134\ar@/^/[ru]^r\ar[r]^r\ar@/_/[rd]^i & 14\ar@/^/[rdd]|(0.54)\hole_<<i                                           & 123\ar[r]^r\ar@/^/[rdd]|(0.54)\hole^<<<<<r & 1\ar[r]|\circ^\delta\ar@/^/[rdd]|<<<<<<\circ|(0.54)\hole^<<<<<\delta & 3 & \\
                               &                                       & 1234\ar@/^/[ru]|(0.42)\hole^>>>r\ar@/_/[rd]|(0.42)\hole_>>>r    &                                  &                                                            &   & \\
4\ar[r]^i\ar@/_/[ruu]^<<<<<i      & 234\ar@/^/[ru]^i\ar[r]^r\ar@/_/[rd]^r & 23\ar@/_/[ruu]^<<i                                                 & 124\ar[r]^r\ar@/_/[ruu]^<<<<<r      & 2\ar[r]|\circ^\delta\ar@/_/[ruu]|<<<<<<\circ^<<<<<<\delta      & 4 & \\
                               &                                       & 24\ar@/_/[ru]^i                                                  &                                  &                                                            &   & }
\end{split}
\end{equation*}
\caption{Indecomposable natural transformations in $\NTBentmannC(C_2)$}
\label{fig:indecomposablesC2Bentmann}
\end{figure}

Proceeding as described in~\S\ref{sec:freeresBentmann}, we find projective resolutions of the following form (we omit explicit descriptions of the boundary maps):
\begin{eqnarray*}
 &\xymatrix{Q_{123}[1]\ar[r] & Q_1[1]\oplus Q_2[1]\ar[r] & Q_{3}\to\ar@/_1pc/[ll]|\circ S_{3}\;,}
\quad\textup{and similarly for $S_{4}$;}
\\
 &\xymatrix{Q_1[1]\ar[r] & Q_3\oplus Q_4\ar[r] & Q_{134}\to\ar@/_1pc/[ll]|\circ S_{134}\;,}
\quad\textup{and similarly for $S_{234}$;}
\\
 &\xymatrix{Q_{4}\ar[r] & Q_{134}\ar[r] & Q_{13}\to\ar@/_1pc/[ll]|\circ S_{13}\;,}
\quad\textup{and similarly for $S_{14}$, $S_{23}$, $S_{24}$;}
\\
 &\xymatrix{Q_{3}\oplus Q_{4}\ar[r] & Q_{134}\oplus Q_{234}\ar[r] & Q_{1234}\to\ar@/_1pc/[ll]|\circ  S_{1234}\;;}
\\
 &\xymatrix{Q_{4}\oplus Q_{123}[1]\ar[r] & Q_{134}\oplus Q_{234}\ar[r] & Q_{1234}\oplus Q_{13}\oplus Q_{23}\to Q_{123}\to S_{123}\ar@/_1pc/[ll]|\circ\;,}
\end{eqnarray*}
and similarly for $S_{124}$;
\[
 \xymatrix{Q_{234}\oplus Q_{1}[1]\ar[r] & Q_{1234}\oplus Q_{23}\oplus Q_{24}\ar[r] & Q_{123}\oplus Q_{124}\to Q_{1} \to S_{1}\ar@/_1pc/[ll]|\circ\;,}
\]
and similarly for $S_{2}$. Again, the periodic part of each of these resolutions is induced by an extension triangle, a Mayer-Vietoris triangle as in Lemma~\ref{lem:induced_by_exact_triangleBentmann} or a more exotic (anti-)distinguished triangle as in Lemma~\ref{lem:induced_by_exact_triangleBentmann2} (we omit the analogous computation here).

We get $\TorBentmann_1^{\NTBentmannC}(S_Y,M)=0$ for every $Y\in\LCBentmann(C_2)^*\setminus\{123,124,1,2\}$, and $\TorBentmann_n^{\NTBentmannC}(S_Y,M)=0$ for all $Y\in\LCBentmann(C_2)^*$ and $n\geq 2$. Therefore,
\[
 \TorBentmann_1^{\NTBentmannC}(\NTBentmannss,M)\cong\bigoplus_{Y\in\{123,124,1,2\}}\TorBentmann_1^{\NTBentmannC}(S_Y,M)\;.
\]
The four groups $\TorBentmann_1^{\NTBentmannC}(S_Y,M)$ with $Y\in\{123,124,1,2\}$ can be described explicitly as in~\S\ref{sec:resolutionsforZ3Bentmann} using the above resolutions. This finishes the proof of Proposition~\textup{\ref{pro:projdim2Bentmann}}.

\section{Proof of Proposition \ref{pro:rationalBentmann}}
	\label{sec:rationalBentmann}
We apply the Meyer-Nest machinery to the homological functor $\FKBentmann\otimes\QBentmann$ on the triangulated category $\KKBentmanncat(X)\otimes\QBentmann$. We need to show that every $\NTBentmannC\otimes\QBentmann$ module of the form $M=\FKBentmann(A)\otimes\QBentmann$ has a projective resolution of length~$1$.
It is easy to see that analogues of Propositions~\ref{pro:projdim1Bentmann} and~\ref{pro:projdim2Bentmann} hold. In particular, the term $\TorBentmann_2^{\NTBentmannC\otimes\QBentmann}(\NTBentmannss\otimes\QBentmann,M)$ always vanishes. Here we use that $\QBentmann$ is a flat $\ZBentmann$-module, so that tensoring with $\QBentmann$ turns projective $\NTBentmannC$-module resolutions into projective $\NTBentmannC\otimes\QBentmann$-module resolutions. Moreover, the freeness condition for the $\QBentmann$-module $\TorBentmann_1^{\NTBentmannC\otimes\QBentmann}(\NTBentmannss\otimes\QBentmann,M)$ is empty since~$\QBentmann$ is a field.

\section{Proof of Proposition \ref{pro:projdim3Bentmann}}
	\label{sec:projdim3Bentmann}
The computations to determine the category $\NTBentmannC(Z_4)$ are very similar to those for the category $\NTBentmannC(Z_3)$ which were carried out in \cite{MN:FiltratedBentmann}. We summarise its structure in Figure~\ref{fig:indecomposablesZ4Bentmann}.
The relations in $\NTBentmannC(Z_4)$ are generated by the following:
\begin{itemize}
 \item the hypercube with vertices $5,15,25,\ldots,12345$ is a commuting diagram;
\item the following compositions vanish:
\begin{gather*} 1235\xrightarrow{i} 12345\xrightarrow{r} 4\;, \quad 1245\xrightarrow{i} 12345\xrightarrow{r} 3\;,\\ 1345\xrightarrow{i} 12345\xrightarrow{r} 2\;,\quad 2345\xrightarrow{i} 12345\xrightarrow{r} 1\;,\\
 1\xrightarrow{\delta} 5\xrightarrow{i} 15\;,\quad 2\xrightarrow{\delta} 5\xrightarrow{i} 25\;,\quad 3\xrightarrow{\delta} 5\xrightarrow{i} 35\;,\quad 4\xrightarrow{\delta} 5\xrightarrow{i} 45\;;
\end{gather*}
\item the sum of the four maps $12345\to 5$ via $1$, $2$, $3$, and~$4$ vanishes.
\end{itemize}
This implies that the space $Z_4$ satisfies the conditions of Proposition~\ref{pro:projdim1Bentmann}.

\begin{figure}[htbp]
\begin{equation*}
\begin{split}
\resizebox{!}{5pt}{
\xymatrix{
&&125\ar[rd]^i\ar[rdd]^i&&&&\\
&15\ar[ru]^i\ar[r]^i\ar[rd]^i&135\ar[r]^i\ar[rddd]^i&1235\ar[rdd]^i&&1\ar[rdd]|\circ^\delta&\\
&25\ar[ruu]^i\ar[rdd]^i\ar[rddd]^i&145\ar[r]^i\ar[rdd]^i&1245\ar[rd]^i&&2\ar[rd]|\circ_\delta&\\
5\ar[ruu]^i\ar[ru]^i\ar[rd]^i\ar[rdd]^i&&&&12345\ar[ruu]^r\ar[ru]^r\ar[rd]^r\ar[rdd]^r&&5\\
&35\ar[ruuu]^i\ar[r]^i\ar[rdd]^i&235\ar[ruuu]^i\ar[rd]^i&1345\ar[ru]^i&&3\ar[ru]|\circ^\delta&\\
&45\ar[ruuu]^i\ar[r]^i\ar[rd]^i&245\ar[ruuu]^i\ar[r]^i&2345\ar[ruu]^i&&4\ar[ruu]|\circ^\delta&\\
&&345\ar[ruu]^i\ar[ru]^i&&&&
}
}
\end{split}
\end{equation*}
\caption{Indecomposable natural transformations in $\NTBentmannC(Z_4)$}
\label{fig:indecomposablesZ4Bentmann}
\end{figure}

In the following, we will define an exact $\NTBentmannC$-left-module $M$ and compute $\TorBentmann_2^{\NTBentmannC}(S_{12345},M)$. By explicit computation, one finds a projective resolution of the simple $\NTBentmannC$-right-module $S_{12345}$ of the following form (again omitting explicit formulas for the boundary maps):
\[
\begin{tikzpicture}[remember picture=true, descr/.style={fill=white,inner sep=0pt}]
        \matrix (m) [
            matrix of math nodes,
             row sep=1em,
             column sep=2em,
              text height=1.5ex, text depth=0.25ex
        ]
        {
	 &  &  \\
	Q_{5}\oplus\alternativebigoplusBentmann_{\mathclap{1\leq i\leq 4}} Q_{12345\setminus i}[1] &
        \alternativebigoplusBentmann_{\mathclap{1\leq l\leq 4}} Q_{l5}\oplus Q_{12345}[1] &
        \alternativebigoplusBentmann_{\mathclap{1\leq j<k\leq 4}} Q_{jk5} \\
        &&\\
          \alternativebigoplusBentmann_{\mathclap{1\leq i\leq 4}} Q_{12345\setminus i} &
        Q_{12345} &
        S_{12345}. \\
        };
        \path[overlay,->,font=\scriptsize]
        (m-2-1) edge (m-2-2)
        (m-2-2) edge (m-2-3)
	(m-2-3) edge[out=10,in=175] node[descr] {$\circ$} (m-2-1)
        (m-2-3) edge[out=346,in=173] (m-4-1)
        (m-4-1) edge (m-4-2)
        (m-4-2) edge (m-4-3);
\end{tikzpicture}
\]
Notice that this sequence is periodic as a cyclic six-term sequence except for the first \emph{two} steps.

Consider the exact $\NTBentmannC$-left-module $M$ defined by the exact sequence
\begin{equation}
\begin{split}
	\label{eq:length2resolutionBentmann}
 0\to P_{12345}\xrightarrow{\left(\begin{smallmatrix}i\\i\\i\\i\end{smallmatrix}\right)}
\bigoplus_{1\leq i\leq 4} P_{12345\setminus i}
\xrightarrow{\left(\begin{smallmatrix}i&-i&0&0\\-i&0&i&0\\0&i&-i&0\\i&0&0&-i\\0&-i&0&i\\0&0&i&-i\end{smallmatrix}\right)}
\bigoplus_{1\leq j<k\leq 4} P_{jk5}\epiBentmann M\;.
\end{split}
\end{equation}
We have $\bigoplus_{1\leq l\leq 4} M(l5)\oplus M(12345)[1]\cong 0\oplus \ZBentmann^3$,
$\bigoplus_{1\leq j<k\leq 4} M(jk5)\cong \ZBentmann^6$, and
$M(5)\oplus\bigoplus_{1\leq i\leq 4} M(12345\setminus i)[1]\cong \ZBentmann[1]\oplus \ZBentmann[1]^8$.
Since
\[
\xymatrix{
\alternativebigoplusBentmann_{1\leq l\leq 4} M(l5)\oplus M(12345)[1]\ar[r] & \alternativebigoplusBentmann_{1\leq j<k\leq 4} M(jk5)\ar[d]|\circ \\
& M(5)\oplus\alternativebigoplusBentmann_{1\leq i\leq 4} M(12345\setminus i)[1]\ar[lu]
}
\]
is exact, a rank argument shows that the map
\[
\alternativebigoplusBentmann_{1\leq l\leq 4} M(l5)\oplus M(12345)[1]\to \alternativebigoplusBentmann_{1\leq j<k\leq 4} M(jk5)
\]
is zero. On the other hand, the kernel of the map
\[
\alternativebigoplusBentmann_{1\leq j<k\leq 4} M(jk5)
\xrightarrow{\left(\begin{smallmatrix}i&-i&0&i&0&0\\-i&0&i&0&-i&0\\0&i&-i&0&0&i\\0&0&0&-i&i&-i\end{smallmatrix}\right)}
\alternativebigoplusBentmann_{1\leq i\leq 4} M(12345\setminus i)
\]
is non-trivial; it consists precisely of the elements in
\[
\alternativebigoplusBentmann_{1\leq j<k\leq 4} M(jk5)\cong\alternativebigoplusBentmann_{1\leq j<k\leq 4}\ZBentmann[\idBentmann_{jk5}^{jk5}]
\]
which are multiples of $([\idBentmann_{jk5}^{jk5}])_{1\leq j<k\leq 4}$. This shows $\TorBentmann_2^{\NTBentmannC}(S_{12345},M)\cong\ZBentmann$. Hence, by Proposition~\ref{pro:projdim1Bentmann}, the module~$M$ has projective dimension at least~2. On the other hand, \eqref{eq:length2resolutionBentmann} is a resolution of length~$2$. Therefore, the projective dimension of~$M$ is exactly~$2$.

Let $k\in\NBentmann_{\geq 2}$ and define $M_k=M\otimes_\ZBentmann \ZBentmann/k$. Since $\TorBentmann_2^{\NTBentmannC}(S_{12345},M_k)\cong\ZBentmann/k$ is non-free, Proposition~\ref{pro:projdim1Bentmann} shows that $M_k$ has at least projective dimension~3. On the other hand, if we abbreviate the resolution \eqref{eq:length2resolutionBentmann} for $M$ by
\begin{equation}
 	\label{eq:proj_res_of _entry-free_moduleBentmann}
0\to P^{(5)}\xrightarrow{\alpha}P^{(4)}\xrightarrow{\beta}P^{(3)}\twoheadrightarrow M\;,
\end{equation}
a projective resolution of length~3 for~$M_k$ is given by
\[
0\to P^{(5)}\xrightarrow{\left(\begin{smallmatrix}k\\ \alpha\end{smallmatrix}\right)}
P^{(5)}\oplus P^{(4)}\xrightarrow{\left(\begin{smallmatrix}\alpha&-k\\0&\beta\end{smallmatrix}\right)}
P^{(4)}\oplus P^{(3)}\xrightarrow{\left(\begin{smallmatrix}\beta&k\end{smallmatrix}\right)}
P^{(3)}\twoheadrightarrow M_k\;,
\]
where $k$ denotes multiplication by $k$.

It remains to show that the modules~$M$ and~$M_k$ can be realised as the filtrated $\KBentmann$-theory of objects in $\BootBentmann(X)$. It suffices to prove this for the module~$M$ since tensoring with the Cuntz algebra $\CuntzBentmann_{k+1}$ then yields a separable \CstarBentmann algebra with filtrated $\KBentmann$-theory~$M_k$ by the K\"unneth Theorem.

The projective resolution \eqref{eq:proj_res_of _entry-free_moduleBentmann} can be written as
\begin{equation*}
0\to \FKBentmann(P^2)\xrightarrow{\FKBentmann(f_2)}\FKBentmann(P^1)\xrightarrow{\FKBentmann(f_1)}\FKBentmann(P^0)\twoheadrightarrow M,
\end{equation*}
because of the equivalence of the category of projective $\NTBentmannC$-modules and the category of $\idealBentmann$-projective objects in $\KKBentmanncat(X)$. Let $N$ be the cokernel of the module map $\FKBentmann(f_2)$. Using \cite{MN:FiltratedBentmann}*{Theorem 4.11}, we obtain an object $A\ininBentmann\BootBentmann(X)$ with $\FKBentmann(A)\cong N$. We thus have a commutative diagram of the form
\begin{equation*}
\xymatrix{
0\ar[r] & \FKBentmann(P^2)\ar[r]^{\FKBentmann(f_2)} & \FKBentmann(P^1)\ar[rr]^{\FKBentmann(f_1)}\ar@{->>}[rd] & & \FKBentmann(P^0)\ar@{->>}[r] & M\;.\\
&&& \FKBentmann(A)\ar@{ >Bentmann->}[ur]^\gamma &&
}
\end{equation*}
Since $A$ belongs to the bootstrap class $\BootBentmann(X)$ and $\FKBentmann(A)$ has a projective resolution of length~1, we can apply the universal coefficient theorem to lift the homomorphism~$\gamma$ to an element $f\in\KKBentmann(X;A,P^0)$. Now we can argue as in the proof of \cite{MN:FiltratedBentmann}*{Theorem 4.11}: since~$f$ is $\idealBentmann$-monic, the filtrated $\KBentmann$-theory of its mapping cone is isomorphic to $\cokerBentmann(\gamma)\cong M$. This completes the proof of Proposition~\textup{\ref{pro:projdim3Bentmann}}.

\section{Cuntz-Krieger Algebras with Projective Dimension~2}
	\label{sec:Cuntz-KriegerBentmann}

In this section we exhibit a \index{Cuntz-Krieger algebra} Cuntz-Krieger algebra~$A$ which is a tight \CstarBentmann{}algebra over the space~$Z_3$ and for which the odd part of $\TorBentmann_1^{\NTBentmannC(Z_3)}\bigbBentmann{\NTBentmannss,\FKBentmann(A)}$---denoted $\TorBentmann_1^\mathrm{odd}$ in the following---is not free. By Proposition~\ref{pro:projdim2Bentmann} this \CstarBentmann algebra has projective dimension~2 in filtrated $\KBentmann$-theory.

In the following we will adhere to the conventions for graph algebras and adjacency matrices from~\cite{CarlsenEilersTomfordeBentmann}.
Let $E$ be the finite graph with vertex set $E^0=\{v_1,v_2,\ldots,v_8\}$ and edges corresponding to the adjacency matrix
\begin{equation}
 	\label{eq:CK-matrixBentmann}
 \begin{pmatrix}
  	B_4&0&0&0\\
	X_1&B_1&0&0\\
	X_2&0&B_2&0\\
	X_3&0&0&B_3
 \end{pmatrix}
\defeqBentmann
 \begin{pmatrix}
\begin{pmatrix}3&2\\2&3\end{pmatrix}&
0&
0&
0\\
\begin{pmatrix}1&1\\1&1\end{pmatrix}&
\begin{pmatrix}3&2\\1&2\end{pmatrix}&
0&0\\
\begin{pmatrix}1&1\\1&1\end{pmatrix}
&0&
\begin{pmatrix}3&2\\1&2\end{pmatrix}&
0\\
\begin{pmatrix}1&1\\1&1\end{pmatrix}
&0&0&
\begin{pmatrix}3&2\\1&2\end{pmatrix}
 \end{pmatrix}\;.
\end{equation}
Since this is a finite graph with no sinks and no sources, the associated graph \CstarBentmann algebra $C^*(E)$ is in fact a Cuntz-Krieger algebra (we can replace~$E$ with its \emph{edge graph}; see \cite{RaeburnBentmann}*{Remark 2.8}). Moreover, the graph~$E$ is easily seen to fulfill condition~(K) because every vertex is the base of two or more simple cycles. As a consequence, the adjacency matrix of the edge graph of~$E$ fulfills condition~(II) from~\cite{CuntzBentmann}. In fact, condition~(K) is designed as a generalisation of condition~(II): see, for instance,~\cite{KumjianBentmann}.

Applying \cite{RaeburnBentmann}*{Theorem 4.9}---and carefully translating between different graph algebra conventions---we find that the ideals of $C^*(E)$ correspond bijectively and in an inclusion-preserving manner to the open subsets of the space~$Z_3$. By \cite{MN:BootstrapBentmann}*{Lemma 2.35}, we may turn~$A$ into a tight \CstarBentmann algebra over~$Z_3$ by declaring $A(\{4\})=I_{\{v_1,v_2\}}$, $A(\{1,4\})=I_{\{v_1,v_2,v_3,v_4\}}$, $A(\{2,4\})=I_{\{v_1,v_2,v_5,v_6\}}$ and $A(\{3,4\})=I_{\{v_1,v_2,v_7,v_8\}}$, where~$I_S$ denotes the ideal corresponding to the saturated hereditary subset~$S$.

It is known how to compute the six-term sequence in $\KBentmann$-theory for an extension of graph \CstarBentmann algebras: see~\cite{CarlsenEilersTomfordeBentmann}. Using this and Proposition~\ref{pro:projdim2Bentmann}, $\TorBentmann_1^\mathrm{odd}$ is the homology of the complex
\begin{equation}
   \label{eq:CK-quotientBentmann}
\ker(\phi_0)
\xrightarrow{\left(\begin{smallmatrix}i&-i&0\\-i&0&i\\0&i&-i\end{smallmatrix}\right)}
\ker(\phi_1)
\xrightarrow{\left(\begin{smallmatrix}i&i&i\end{smallmatrix}\right)}
\ker(\phi_2)\;,
\end{equation}
\[
\textup{where}\quad
\phi_0=\diagBentmann\left(
\left(\begin{smallmatrix}B'_4&X_1^t\\0&B'_1\end{smallmatrix}\right),
\left(\begin{smallmatrix}B'_4&X_2^t\\0&B'_2\end{smallmatrix}\right),
\left(\begin{smallmatrix}B'_4&X_3^t\\0&B'_3\end{smallmatrix}\right)\right)\;,\quad
\phi_2=\left(\begin{smallmatrix}B'_4&X_1^t&X_2^t&X_3^t\\0&B'_1&0&0\\0&0&B'_2&0\\0&0&0&B'_3\end{smallmatrix}\right)\;,
\]
\[
\phi_1=\diagBentmann\left(
\left(\begin{smallmatrix}B'_4&X_1^t&X_2^t\\0&B'_1&0\\0&0&B'_2\end{smallmatrix}\right),
\left(\begin{smallmatrix}B'_4&X_1^t&X_3^t\\0&B'_1&0\\0&0&B'_3\end{smallmatrix}\right),
\left(\begin{smallmatrix}B'_4&X_2^t&X_3^t\\0&B'_2&0\\0&0&B'_3\end{smallmatrix}\right)\right)\;,
\]
and $B'_4=B_4^t-\left(\begin{smallmatrix}1&0\\0&1\end{smallmatrix}\right)
=\left(\begin{smallmatrix}2&2\\2&2\end{smallmatrix}\right)$ and $B'_j=B_j^t-\left(\begin{smallmatrix}1&0\\0&1\end{smallmatrix}\right)
=\left(\begin{smallmatrix}2&1\\2&1\end{smallmatrix}\right)$ for $1\leq j\leq 3$. We obtain a commutative diagram
\begin{equation}
 	\label{eq:commutative _diagramBentmann}
\begin{split}
 \xymatrix{
\ker(\phi_0)\ar[d]^{f_K}\ar@{ >Bentmann->}[r] & (\ZBentmann^{\oplus 2})^{\oplus (2\cdot 3)}\ar[d]^{f}\ar@{->>}[r]^{\phi_0} &\imBentmann(\phi_0)\ar[d]^{f_I} \\
\ker(\phi_1)\ar[d]^{g_K}\ar@{ >Bentmann->}[r] & (\ZBentmann^{\oplus 2})^{\oplus (3\cdot 3)}\ar[d]^{g}\ar@{->>}[r]^{\phi_1} &\imBentmann(\phi_1)\ar[d]^{g_I} \\
\ker(\phi_2)\ar@{ >Bentmann->}[r] & (\ZBentmann^{\oplus 2})^{\oplus (4\cdot 1)}\ar@{->>}[r]^{\phi_2} &\imBentmann(\phi_2)\;,
}
\end{split}
\end{equation}
where $f$ and $g$ have the block forms
\[
 f=
\left(\begin{smallmatrix}
\idBentmann&0&-\idBentmann&0&0&0\\
0&\idBentmann&0&0&0&0\\
0&0&0&-\idBentmann&0&0\\
-\idBentmann&0&0&0&\idBentmann&0\\
0&-\idBentmann&0&0&0&0\\
0&0&0&0&0&\idBentmann\\
0&0&\idBentmann&0&-\idBentmann&0\\
0&0&0&\idBentmann&0&0\\
0&0&0&0&0&-\idBentmann
\end{smallmatrix}\right)\;,
\qquad
 g=
\left(\begin{smallmatrix}
\idBentmann&0&0&\idBentmann&0&0&\idBentmann&0&0\\
0&\idBentmann&0&0&\idBentmann&0&0&0&0\\
0&0&\idBentmann&0&0&0&0&\idBentmann&0\\
0&0&0&0&0&\idBentmann&0&0&\idBentmann
\end{smallmatrix}\right)\;,
\]
and $f_K\defeqBentmann f|_{\ker(\phi_0)}$, $f_I\defeqBentmann f|_{\imBentmann(\phi_0)}$, $g_K\defeqBentmann g|_{\ker(\phi_1)}$, $g_I\defeqBentmann g|_{\imBentmann(\phi_1)}$. Notice that~$f$ and~$g$ are defined in a way such that the restrictions $f|_{\ker(\phi_0)}$ and $g|_{\ker(\phi_1)}$ are exactly the maps from \eqref{eq:CK-quotientBentmann} in the identification made above.

We abbreviate the above short exact sequence of cochain complexes \eqref{eq:commutative _diagramBentmann} as
$K_\bullet\rightarrowtail Z_\bullet\twoheadrightarrow I_\bullet$.
The part $\HomologyBentmann^0(Z_\bullet)\to\HomologyBentmann^0(I_\bullet)\to\HomologyBentmann^1(K_\bullet)\to\HomologyBentmann^1(Z_\bullet)$ in the corresponding long exact homology sequence can be identified with
\[
 \ker(f)\xrightarrow{\phi_0}\ker(f_I)\to\frac{\ker(g_K)}{\imBentmann(f_K)}\to 0\;.
\]
Hence
\[
\TorBentmann_1^\mathrm{odd}\cong\frac{\ker(g_K)}{\imBentmann(f_K)}\cong\frac{\ker(f_I)}{\phi_0\bigbBentmann{\ker(f)}}\cong\frac{\ker(f)\cap\imBentmann(\phi_0)}{\phi_0\bigbBentmann{\ker(f)}}\;.
\]
We have $\ker(f)=\{(v,0,v,0,v,0)\mid v\in\ZBentmann^2\}\subset (\ZBentmann^{\oplus 2})^{\oplus (2\cdot 3)}$.

From the concrete form \eqref{eq:CK-matrixBentmann} of the adjacency matrix, we find that $\ker(f)\cap\imBentmann(\phi_0)$ is the free cyclic group generated by $(1,1,0,0,1,1,0,0,1,1,0,0)$, while $\phi_0\bigbBentmann{\ker(f)}$ is the subgroup generated by $(2,2,0,0,2,2,0,0,2,2,0,0)$. Hence $\TorBentmann_1^\mathrm{odd}\cong\ZBentmann/2$ is not free.

Now we briefly indicate how to construct a similar counterexample for the space~$S$.
Consider the integer matrix
\begin{equation*}
 	\label{eq:CK-matrixSBentmann}
 \begin{pmatrix}
  	B_4&0&0&0\\
	X_{43}&B_3&0&0\\
	X_{42}&0&B_2&0\\
	X_{41}&X_{31}&X_{21}&B_1
 \end{pmatrix}
\defeqBentmann
\begin{pmatrix}
  	\begin{pmatrix}3\end{pmatrix}&0&0&0\\
	\begin{pmatrix}2\end{pmatrix}&\begin{pmatrix}3\end{pmatrix}&0&0\\
	\begin{pmatrix}2\end{pmatrix}&0&\begin{pmatrix}3\end{pmatrix}&0\\
	\begin{pmatrix}2\\0\end{pmatrix}&\begin{pmatrix}1\\0\end{pmatrix}&\begin{pmatrix}1\\0\end{pmatrix}&\begin{pmatrix}2&1\\1&2\end{pmatrix}
 \end{pmatrix}\;.
\end{equation*}
The corresponding graph $F$ fulfills condition~(K) and has no sources or sinks. The associated graph \CstarBentmann algebra $C^*(F)$ is therefore a Cuntz-Krieger algebra satisfying condition~(II). It is easily read from the block structure of the edge matrix that the primitive ideal space of $C^*(F)$ is homeomorphic to~$S$. We are going to compute the even part of $\TorBentmann_1^{\NTBentmannC(S)}\bigbBentmann{\NTBentmannss,\FKBentmann(C^*(F))}$. Since the nice computation methods from the previous example do not carry over, we carry out a more ad hoc calculation.

By Remark~\ref{rem:Tor_for_SBentmann}, the even part of our $\TorBentmann$-term is isomorphic to the homology of the complex
\begin{equation*}
\xymatrix{
 \ker\left({\begin{smallmatrix}B'_2&X_{21}^t\\0&B'_1\end{smallmatrix}}\right)\ar[r]|\circ^*+{\left(\begin{smallmatrix}X_{42}^t&X_{41}^t\\0&X_{31}^t\end{smallmatrix}\right)}\ar[rd]^<<<<<{-r}    & \cokerBentmann\left({\begin{smallmatrix}B'_4&X_{43}^t\\0&B'_3\end{smallmatrix}}\right)\ar[rd]^i                    &         \\
  \cokerBentmann(B'_4)\ar[ru]|\hole^<<<<<{-i}\ar[rd]|\hole^<<<<<i & \ker(B'_1)\ar[r]|<<<<<\circ^<<<<<{\left(\begin{smallmatrix}X_{41}^t\\X_{31}^t\\X_{21}^t\end{smallmatrix}\right)} & \cokerBentmann\left({\begin{smallmatrix}B'_4&X_{43}^t&X_{42}^t\\0&B'_3&0\\0&0&B'_2\end{smallmatrix}}\right)\;,\\
  \ker\left({\begin{smallmatrix}B'_3&X_{31}^t\\0&B'_1\end{smallmatrix}}\right)\ar[ru]^<<<<r\ar[r]|\circ_*+{-{\left(\begin{smallmatrix}X_{43}^t&X_{41}^t\\0&X_{21}^t\end{smallmatrix}\right)}}    & \cokerBentmann\left({\begin{smallmatrix}B'_4&X_{42}^t\\0&B'_2\end{smallmatrix}}\right)\ar[ru]^i                    &
}
\end{equation*}
where column-wise direct sums are taken. Here $B'_1=B_1^t-\left({\begin{smallmatrix}1&0\\0&1\end{smallmatrix}}\right)=\left({\begin{smallmatrix}1&1\\1&1\end{smallmatrix}}\right)$ and $B'_j=B_j^t-{\begin{pmatrix}1\end{pmatrix}}={\begin{pmatrix}2\end{pmatrix}}$ for $2\leq j\leq 4$. This complex can be identified with
\begin{equation*}
\ZBentmann\oplus\ZBentmann/2\oplus\ZBentmann\xrightarrow{\left(\begin{smallmatrix}0&1&0\\0&0&0\\-2&0&2\\0&1&0\\0&0&0\end{smallmatrix}\right)} (\ZBentmann/2)^2\oplus\ZBentmann\oplus (\ZBentmann/2)^2\xrightarrow{\left(\begin{smallmatrix}1&0&0&1&0\\0&1&1&0&0\\0&0&1&0&1\end{smallmatrix}\right)}(\ZBentmann/2)^3\;,
\end{equation*}
the homology of which is isomorphic to $\ZBentmann/2$; a generator is given by the class of $(0,1,1,0,1)\in (\ZBentmann/2)^2\oplus\ZBentmann\oplus (\ZBentmann/2)^2$. This concludes the proof of Proposition~\ref{pro:CKBentmann}.

\begin{bibsection}
  \begin{biblist}
\bib{BentmannBentmann}{article}{
      author={Bentmann, Rasmus},
       title={Filtrated {K}-theory and classification of {$C^*$}-algebras},
        date={University of {G}\"ottingen, 2010},
        note={Diplom thesis, available online at:\\ \href{www.math.ku.dk/~bentmann/thesis.pdf}{www.math.ku.dk/\textasciitilde bentmann/thesis.pdf}},
}

\bib{BKBentmann}{article}{
  author={Bentmann, Rasmus},
  author={K\"ohler, Manuel},
  title={Universal Coefficient Theorems for $C^*$-algebras over finite topological spaces},
  eprint = {arXiv:math/1101.5702},
  year = {2011},
}

\bib{Bonkat:ThesisBentmann}{thesis}{
  author={Bonkat, Alexander},
  title={Bivariante \(K\)\nobreakdash -Theorie f\"ur Kategorien projektiver Systeme von \(C^*\)\nobreakdash -Al\-ge\-bren},
  date={2002},
  institution={Westf. Wilhelms-Universit\"at M\"unster},
  type={phdthesis},
  language={German},
  eprint={http://deposit.ddb.de/cgi-bin/dokserv?idn=967387191},
}

\bib{CarlsenEilersTomfordeBentmann}{article}{
   author={Carlsen, Toke Meier},
   author={Eilers, S{\o}ren},
   author={Tomforde, Mark},
   title={Index maps in the $K$-theory of graph algebras},
   journal={J. K-Theory},
   volume={9},
   date={2012},
   number={2},
   pages={385--406},
   issn={1865-2433},
   review={\MRrefBentmann{2922394}{}},
   doi={10.1017/is011004017jkt156},
}

\bib{CuntzBentmann}{article}{
   author={Cuntz, Joachim},
   title={A class of $C^{\ast} $-algebras and topological Markov chains.
   II. Reducible chains and the Ext-functor for $C^{\ast} $-algebras},
   journal={Invent. Math.},
   volume={63},
   date={1981},
   number={1},
   pages={25--40},
   issn={0020-9910},
   review={\MRrefBentmann {608527}{82f:46073b)}},
   doi={10.1007/BF01389192},
}

\bib{MR2812030Bentmann}{article}{
   author={Inassaridze, Hvedri},
   author={Kandelaki, Tamaz},
   author={Meyer, Ralf},
   title={Localisation and colocalisation of $\KKBentmann$-theory},
   journal={Abh. Math. Semin. Univ. Hambg.},
   volume={81},
   date={2011},
   number={1},
   pages={19--34},
   issn={0025-5858},
   review={\MRrefBentmann{2812030}{}},
   doi={10.1007/s12188-011-0050-7},
}

\bib{KirchbergBentmann}{article}{
  author={Kirchberg, Eberhard},
  title={Das nicht-kommutative Michael-Auswahlprinzip und die Klassifikation nicht-einfacher Algebren},
  language={German, with English summary},
  conference={ title={$C^*$-algebras}, address={M\"unster}, date={1999}, },
  book={ publisher={Springer}, place={Berlin}, },
  date={2000},
  pages={92--141},
  review={\MRrefBentmann {1796912}{2001m:46161}},
}

\bib{KumjianBentmann}{article}{
   author={Kumjian, Alex},
   author={Pask, David},
   author={Raeburn, Iain},
   author={Renault, Jean},
   title={Graphs, groupoids, and Cuntz-Krieger algebras},
   journal={J. Funct. Anal.},
   volume={144},
   date={1997},
   number={2},
   pages={505--541},
   issn={0022-1236},
   review={\MRrefBentmann {1432596}{98g:46083)}},
   doi={10.1006/jfan.1996.3001},
}

\bib{MN:HomologicalIBentmann}{article}{
   author={Meyer, Ralf},
   author={Nest, Ryszard},
   title={Homological algebra in bivariant $K$-theory and other triangulated
   categories. I},
   conference={
      title={Triangulated categories},
   },
   book={
      series={London Math. Soc. Lecture Note Ser.},
      volume={375},
      publisher={Cambridge Univ. Press},
      place={Cambridge},
   },
   date={2010},
   pages={236--289},
   review={\MRrefBentmann{2681710}{2012f:19015}},
}

\bib{MN:FiltratedBentmann}{article}{
   author={Meyer, Ralf},
   author={Nest, Ryszard},
   title={${\rm C}^*$-algebras over topological spaces: filtrated
   K-theory},
   journal={Canad. J. Math.},
   volume={64},
   date={2012},
   number={2},
   pages={368--408},
   issn={0008-414X},
   review={\MRrefBentmann{2953205}{}},
   doi={10.4153/CJM-2011-061-x},
}

\bib{MN:BootstrapBentmann}{article}{
  author={Meyer, Ralf},
  author={Nest, Ryszard},
  title={$C^*$-algebras over topological spaces: the bootstrap class},
  journal={M\"unster J. Math.},
  volume={2},
  date={2009},
  pages={215--252},
  issn={1867-5778},
  review={\MRrefBentmann {2545613}{}},
}

\bib{NeemanBentmann}{book}{
   author={Neeman, Amnon},
   title={Triangulated categories},
   series={Annals of Mathematics Studies},
   volume={148},
   publisher={Princeton University Press},
   place={Princeton, NJ},
   date={2001},
   pages={viii+449},
   isbn={0-691-08685-0},
   isbn={0-691-08686-9},
   review={\MRrefBentmann{1812507}{2001k:18010}},
}

\bib{RaeburnBentmann}{book}{
   author={Raeburn, Iain},
   title={Graph algebras},
   series={CBMS Regional Conference Series in Mathematics},
   volume={103},
   publisher={Published for the Conference Board of the Mathematical
   Sciences, Washington, DC},
   year={2005},
   pages={vi+113},
   isbn={0-8218-3660-9},
   review={\MRrefBentmann {2135030}{2005k:46141}},
}

\bib{RestorffBentmann}{article}{
   author={Restorff, Gunnar},
   title={Classification of Cuntz-Krieger algebras up to stable isomorphism},
   journal={J. Reine Angew. Math.},
   volume={598},
   date={2006},
   pages={185--210},
   issn={0075-4102},
   review={\MRrefBentmann {2270572}{2007m:46090)}},
   doi={10.1515/CRELLE.2006.074},
}

\bib{Restorff:ThesisBentmann}{thesis}{
  author={Restorff, Gunnar},
  title={Classification of Non-Simple $\textup C^*$\nobreakdash -Algebras},
  type={phdthesis},
  institution={K{\o }benhavns Universitet},
  date={2008},
  isbn={978-87-91927-25-6},
  eprint={http://www.math.ku.dk/~restorff/papers/afhandling_med_ISBN.pdf},
}

\bib{RSBentmann}{article}{
  author={Rosenberg, Jonathan},
  author={Schochet, Claude},
  title={The K\"unneth theorem and the universal coefficient theorem for Kasparov's generalized $\KBentmann$-functor},
  journal={Duke Math. J.},
  volume={55},
  date={1987},
  number={2},
  pages={431--474},
  issn={0012-7094},
  review={\MRrefBentmann {894590}{88i:46091}},
  doi={10.1215/S0012-7094-87-05524-4},
}
	
\end{biblist}
\end{bibsection}

\end{document}